\renewcommand{\footnote}{\endnote}
\newtheorem{theorem}{Theorem}
\newtheorem{lemma}{Lemma}
\newtheorem{proposition}{Proposition}
\newtheorem{definition}{Definition}
\newtheorem{remark}{Remark}
\newtheorem{corollary}{Corollary}
\newtheorem{example}{Example}
\begin{document}
\title{A characterization of quasi-rational polygons}
\author{Nicolas Bedaride\footnote{ Laboratoire d'Analyse Topologie et Probabilités  UMR 7353, Université Aix Marseille, avenue escadrille Normandie Niemen 13397 Marseille cedex 20, France. nicolas.bedaride@univ-amu.fr}}
\date{}
\maketitle
\begin{abstract}
The aim of this paper is to study quasi-rational polygons related to the outer billiard. We compare different notions introduced in \cite{Gu.Si.92} and \cite{Sch.09} and make a synthesis of those. 

\end{abstract}

\section{Introduction}
The outer billiard map is a transformation $T$ of the exterior of a planar convex bounded domain $D$ defined as follows: $T(M)=N$ if the segment $MN$ is tangent to the boundary of $D$ at its midpoint, and $D$ lies at the right of $MN$. The outer billiard map is not defined if the tangent segment $MN$ shares more than one point with the boundary of $D$. In the case where $P$ is a convex polygon; the set of points for which $T$ or any of its iterations is not defined is contained in a countable union of lines and has zero measure. The dual billiard map has been introduced by Neumann in \cite{Neu.59} as a toy model for the planet orbits. One of the most interesting questions was whether the orbits of $T$ might escape to infinity for a polygonal domain $D$.  
 
Two particular classes of polygons have been 
introduced by Kolodziej et al. in several articles, see \cite{Kol,Gu.Si.92, Viv.Shai.87}. These classes are named rational and quasi-rational polygons and contain all the regular polygons. A rational polygon has vertices on a lattice of $\mathbb{R}^2$. They prove that every orbit outside a polygon in this class is bounded. Every regular polygon is a quasi-rational polygon, and it is not a rational polygon except if there are $3, 4$ or $6$ edges. In the case of the regular pentagon, Tabachnikov completely described the dynamics of the outer billiard map in terms of symbolic dynamics, see \cite{Ta.95}. He proves that some orbits are bounded and non periodic. The symbolic coding of this map has been given in \cite{Bed.Ca.10} for a regular polygon with $3, 4, 5, 6$ and $10$ edges. 

For non quasi-rational polygons, there is no general study. The case of trapezoids has been studied. The set of trapezoids can be parametrized up to affinity by one parameter.  For an irrational parameter, it is not a quasi-rational polygon, and the proof of \cite{Gu.Si.92} cannot be used for a polygon with parallel sides. Nevertheless, Li proved that all the orbits of the outer billiard map are bounded (this theorem is also proved by Genin) see \cite{Li.09} and \cite{Gen.08}.
Recently Schwartz described a family of quadrilaterals, named kites, for which there exists unbounded orbits, see \cite{Sch.07} and \cite{Sch.09}. In these papers Schwartz introduces many tools in order to study the dynamics.  These tools can also be used in the case of regular polygons, see \cite{Sch.10}.

In this article we investigate the case of quasi-rational polygons. The main 
achievements of the paper consist of  a synthesis of results of \cite{Gu.Si.92} 
and the notions introduced by Schwartz. These links allow us to give a new characterization of this class and to give some simple conditions which guarantee the quasi rationality.

\begin{remark}
In this article, $P$ is a polygon with $n$ vertices without parallel edges, see last section for some comments. 
All the figures correspond to the same polygon.
\end{remark}


\section{Overview of the paper}

First we recall usual definitions about dual billiard in Section \ref{secdual} and introduce our definition of quasi-rational polygon. Next, in Section \ref{sec:equivalence}, we show that our definition is equivalent to the old one of  \cite{Gu.Si.92} and also similar to \cite{Sch.09}. In Section \ref{sec:gs} we prove the classical theorem on quasi-rational polygon using our definition. Finally in Section \ref{sec:new} we use our definition to obtain new results on quasi-rational polygons.


\section{Outer billiard}\label{secdual}
We refer to \cite{Ta} or \cite{Gu.Si.92}.
We consider a convex polygon $P$ in $\mathbb{R}^2$ with $n$ vertices. Let $\overline{P}=\mathbb{R}^2\setminus P$ be the complement of $P$.\\
We fix an orientation on $\mathbb{R}^2$. We will define the outer billiard map off a countable union of lines. The map will be defined for all time.

For a point $M\in\overline{P}$, there are two half-lines $R,R'$ emanating from $M$ and tangent to $P$, see Figure \ref{fig-def-1}. Assume that the oriented angle $R,R'$ has positive measure. Denote by $A^+,A^-$ the tangent points on $R$ respectively $R'$.
We say that $A^+$ is the vertex {\bf associated} to $M$.
\begin{definition}\label{point}
The outer billiard map is the map $T$ defined as follows:
$$T(M)=r_{A^+}(M)$$ 
where $r_{A^+}$ is the reflection about $A^+$.
\end{definition}

\begin{figure}
\begin{center}
\begin{tikzpicture}
\fill (0,0)--(2,0)--(1,2)--(0,1)--cycle;
\draw[dashed] (5,2)--(-1,2);
\draw[dashed] (-1,2)--(1,-2);
\draw (3,2) node{$\bullet$} node[above]{$TM$};
\draw (-1,2) node{$\bullet$} node[above]{$M$};
\draw (1,2) node[above]{$A^+$};
\draw (0,0) node[left]{$A^-$};
\draw (1,-2) node{$\bullet$} node[left]{$T^{-1}M$};
\draw[dashed](3,2)--(1.5,-1);
\draw (4,2) node[above]{$R$};
\end{tikzpicture}
\begin{tikzpicture}[scale=.5]
\fill (0,0)--(2,0)--(1,2)--(0,1)--cycle;
\draw[dashed] (0,0)--++(0,-4);
\draw[dashed] (2,0)--++(4,0);
\draw[dashed] (1,2)--++(-2,4);
\draw[dashed] (0,1)--++(-3,-3);
\end{tikzpicture}

\end{center}
\caption{The outer billiard map}\label{fig-def-1}
\end{figure}
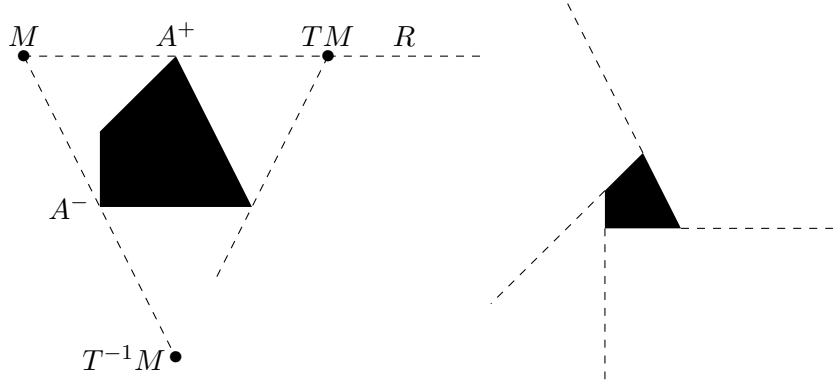

\begin{definition}
A polygon $P$ is said to be rational if the vertices of $P$ are on a lattice of $\mathbb{R}^2$.
\end{definition}

We refer to \cite{Sch.09}.
Consider a polygon without parallel edges. Assume the edges are oriented counterclockwise sense (while $T$ is oriented clockwise). For each edge, consider the vertex of $P$ furthest from the line supporting the edge. It is unique by convexity and assumption. Then denote by $V$ the vector equal to twice the vector between the final vertex of the edge and this vertex. A {\bf strip} is the band formed by the line supporting the edge and $V$, see Figure \ref{vector}. It is denoted $(\Sigma,V)$ or $\Sigma$ if no confusion is possible. We index them with respect to the slopes of the sides of the polygon, this gives the sequences $(\Sigma_i,V_i)_{1\leq i\leq n}$.


\begin{definition}
Let $\alpha_1,\dots, \alpha_n$ be non zero real numbers, we say that $(\alpha_1,\dots,\alpha_n)$ are commensurate if $\frac{\alpha_2}{\alpha_1},\dots,\frac{\alpha_n}{\alpha_{n-1}},\frac{\alpha_1}{\alpha_n}$ are rational numbers.
\end{definition}

\begin{definition}
The polygon $P$ is quasi-rational if and only if\\
$(|V_1\wedge V_{2}|,\dots ,|V_n\wedge V_1|)$ are commensurates.
\end{definition}

For example, consider the polygon with vertices $A,B, C, D$, see Figure \ref{vector}. The vectors are equal to:
$V_1=2\vec{CB}, V_2=2\vec{AC}, V_3=2\vec{BD}, V_4=2\vec{BA}$.

\begin{figure}[h!]
\begin{center}
\begin{tikzpicture}[scale=.5]

\fill (0,0)--(2,0)--(1,2)--(0,1)--cycle;
\draw (0,1) node[left]{$C$};
\draw (0,0) node[below]{$A$};
\draw (2,0) node[below]{$B$};
\draw (1,2) node[left]{$D$};

\draw[dashed] (8,0)--++(-2,4);
\draw[dashed] (9,4)--++(-4,2);
\draw[dashed] (4,5)--++(-4,0);
\draw[dashed](-1,6)--++(-2,-4);
\draw (-6,0)--(8,0);
\draw (-6,4)--(8,4);
\draw (2,0)--++(-4,8);
\draw (2,0)--++(3,-6);
\draw (0,1)--++(6,6);
\draw (0,1)--++(-4,-4);
\draw (5,0)--++(4,4);
\draw (5,0)--++(-7,-7);
\draw (-2,0)--++(-4,8);
\draw (-2,0)--++(5,-10);

\draw (0,-8)--(0,6);
\draw (4,-8)--(4,6);

\end{tikzpicture}
\end{center}
\caption{Polygon and strips}\label{vector}
\end{figure}
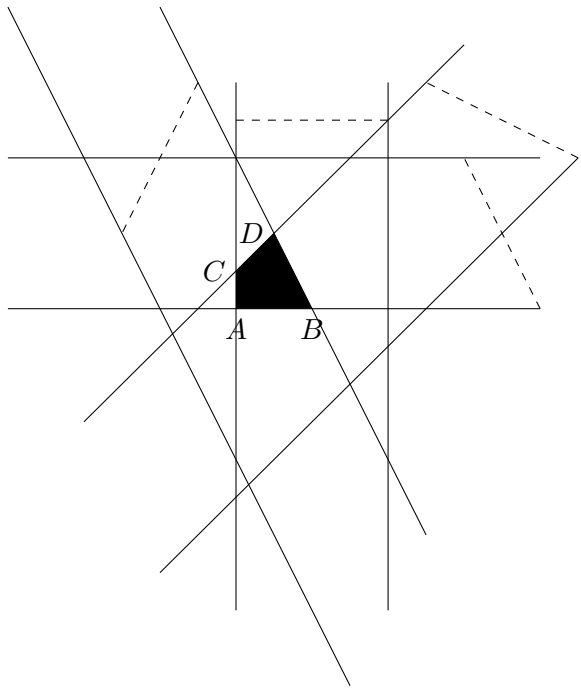


\section{Unfolding}
In this Section we recall the notion of unfolding introduced in \cite{Gu.Si.92}. This notion is used to transform the outer billiard map in a piecewise translation map defined on cones. 
\subsection{Definitions}

\begin{figure}[t]
\begin{center}
\begin{tikzpicture}
\draw[dashed] (0,0)--(2,0);
\draw[dashed] (0,0)--(2,2);
\draw[dashed] (0,0)--(0,2);
\draw[dashed] (0,0)--(-2,2);
\draw[dashed] (0,0)--(-2,0);
\draw[dashed] (0,0)--(-2,-2);
\draw[dashed] (0,0)--(0,-2);
\draw[dashed] (0,0)--(2,-2);

\draw (1.5,0) node[below]{$l_1$};
\draw (0,0) node[below]{$M$};
\draw (0,1.5) node[right]{$l_3$};

\draw (1,0.25)--(2,0.25);
\draw (2,0.25)--(1.5,1.25); 
\draw (1.5,1.25)--(1,0.75);
\draw  (1,0.75)--(1,0.25);

\draw (3,0.25)--(2,0.25);
\draw (3,0.25)--(3,-0.25);
\draw (2,0.25)--(2.5,-0.75);
\draw (2.5,-0.75)--(3,-0.25);

\draw (1,2.25)--(1.5,1.25); 
\draw (1.5,1.25)--(2,1.75);
\draw (2,1.75)--(2,2.25);
\draw (2,2.25)--(1,2.25);
\end{tikzpicture}
\begin{tikzpicture}
\draw (1,0.25)--(2,0.25);
\draw (2,0.25)--(1.5,1.25); 
\draw (1.5,1.25)--(1,0.75);
\draw  (1,0.75)--(1,0.25);
\draw (1.5,1.25) node[left]{$A'$};
\draw (3,0.25)--(2,0.25);
\draw (3,0.25)--(3,-0.25);
\draw (2,0.25)--(2.5,-0.75);
\draw (2.5,-0.75)--(3,-0.25);
\draw (2,.25) node[below left]{$A$};
\draw (0,0) node[below]{$M$};
\end{tikzpicture}
\end{center}
\caption{Necklace dynamics}\label{fig:neck}
\end{figure}
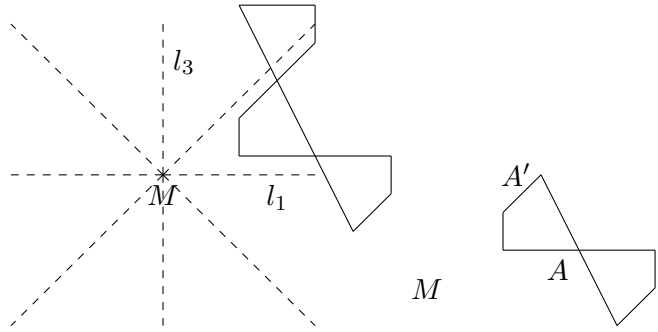

Consider two vertices $A, B$ of $P$, and the images of $P$ by the rotations $r_{A}, r_B$ of angle $\pi$. They are equal up to translation. Denote by $\tilde P$ one of these polygons. Let $\mathcal{S}_P$ be the set of those polygons in $\mathbb{R}^2$ that are images by a translation by $P$ or $\tilde P$. 
Let $M$ be a point in $\overline P$, define the following bijective map. 
$$\begin{array}{ccccc}
\pi_M&:&\mathcal{S}_P&\to& \mathbb{R}^2\times\{-1,1\} \\
&&Q&\mapsto&(A,\varepsilon)\end{array}$$ 

Consider the image of $M$ by the outer billiard map outside $Q$. It is obtained by a rotation of angle $\pi$ centered at a vertex of $Q$. Let $A$ be this vertex of $Q$. Moreover we take $\varepsilon=1$ if $Q$ is a translate of $P$, $\varepsilon=-1$ if $Q$ is a translate of $\tilde P$. We say $A$ is associated to $M$ for $Q$. It is clear that $\pi_M$ is a bijection.
  
We define a new map called {\bf the unfolding} of the dual billiard map.

$$\begin{array}{ccccc}
\tilde T&:&\mathbb{R}^2\times\{-1,1\}&\to&\mathbb{R}^2\times\{-1,1\}\\
&&(A,\varepsilon)&\mapsto&(A',-\varepsilon)
\end{array}$$
The ordered pair $(A,\varepsilon)$ comes from a polygon $Q$ via the map $\pi_M$. Consider the polygon $Q'$ image of the polygon $Q$ by a rotation of angle $\pi$ of center $A$, see Figure \ref{fig:neck}. 
The point $A'$ is the vertex associated to $M$ for $Q'$.
 
The dynamics of this map is related to the outer billiard map by the following result. In what follows we will also denote by $\tilde T$ the projection of $\tilde T$ to $\mathbb{R}^2$.

\begin{definition}
Denote by $(l_i)_{i\leq n}$ the lines passing through $M$ and parallel to the edges of $P$. They defines $2n$ cones $(C_i)_{1\leq i\leq 2n}$, each cone has for boundary two half lines $R_i, R_{i+1}$. 
\end{definition}

\begin{proposition}\cite{Gu.Si.92}\label{lem:closed}
We have: 
\begin{itemize}
\item The sequence $(T^k(M))_k$ is bounded (resp. periodic) if and only if there exists a point $Q\in \mathbb{R}^2\times\{-1,1\}$ such that the orbit of $Q$ is bounded (resp. periodic) for $\tilde T$.
\item For every cone $C_i$, there exists a vector $a_i$ such that if $A,\tilde T A\in C_i$, then the restriction of $\tilde T$ to a cone is a translation of vector $a_i$. Moreover we have for every integer $i$, $a_{n+i}=-a_i$. 
\item There exists a polygon $P^*$ with $2n$ edges, with vertices on $C_1,\dots,C_n$ such that each side is parallel to some $a_i$.
\end{itemize}
\end{proposition}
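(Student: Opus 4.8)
The plan is to establish the three bullets in order, since the second feeds into the third. For the first bullet, I would unwind the definitions of $\pi_M$ and $\tilde T$: by construction $\pi_M$ conjugates the outer billiard map $T$ acting on the orbit of $M$ (more precisely, on the tower of polygons $\mathcal S_P$ visited by successive reflections) to the map $\tilde T$ on $\mathbb R^2\times\{-1,1\}$. One has to check that a single application of $T$ to $M$ outside $Q$ corresponds exactly to passing from $(A,\varepsilon)=\pi_M(Q)$ to $\pi_M(Q')=(A',-\varepsilon)$, where $Q'$ is the $\pi$-rotation of $Q$ about $A$; this is essentially the observation that reflecting $M$ in $A$ is the same as reflecting the whole configuration. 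Since $T^k(M)$ lives at bounded distance from the corresponding vertex $A_k$ (the diameter of $P$ bounds the discrepancy), boundedness/periodicity of $(T^k M)_k$ is equivalent to boundedness/periodicity of the vertex sequence, i.e. of the $\tilde T$-orbit of $Q=\pi_M^{-1}(A_0,\varepsilon_0)$. The $\{-1,1\}$ factor just tracks the parity of the number of reflections and is automatically periodic with period dividing $2$, so it does not affect the statement.

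For the second bullet, fix a point $M$ and the $2n$ cones $C_i$ cut out by the lines $l_i$ through $M$ parallel to the edges of $P$. The key geometric fact is that whether the vertex of $P$ (or $\tilde P$) associated to a point $X$ is a given vertex $V_j$ depends only on which cone $X-M$ lies in, because "associated vertex'' is determined by the directions of the two supporting half-lines from $X$ to the polygon, and those directions change exactly when $X$ crosses one of the lines $l_i$. Hence on each cone $C_i$ the map $\tilde T$, restricted to pairs $(A,\varepsilon)$ with $A$ and $\tilde TA$ both in $C_i$, is: reflect in a fixed vertex, then translate by the (fixed) vector joining the two relevant vertices of $P$ and $\tilde P$ — the net effect of "rotate $Q$ about $A$ by $\pi$'' followed by "read off the new associated vertex'' is translation by a constant vector $a_i$ depending only on $i$. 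The relation $a_{n+i}=-a_i$ follows because the cone $C_{n+i}$ is the image of $C_i$ under the central symmetry $X\mapsto 2M-X$, and this symmetry swaps the roles of $P$ and $\tilde P$ (equivalently, flips $\varepsilon$), which negates the translation vector.

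For the third bullet, I would assemble the polygon $P^*$ directly from the data $(C_i,a_i)$. Starting from a suitable point on the boundary ray $R_1$, lay down in cyclic order edges that are translates of the vectors $a_1,\dots,a_n$ (suitably scaled), arranging that the $i$-th edge lies in cone $C_i$ and its endpoints land on the appropriate bounding rays $R_i,R_{i+1}$; because $\sum_{i=1}^{2n} a_i = \sum_{i=1}^n (a_i + a_{n+i}) = 0$ by the relation just proved, the resulting $2n$-gon closes up, and its central symmetry is inherited from $a_{n+i}=-a_i$. Convexity is forced by the fact that the edge directions $a_i$ are encountered in the cyclic angular order of the cones $C_i$. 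One has to choose the scaling of the edges so that consecutive vertices actually sit on consecutive rays $R_i$; this is a one-parameter family of linear conditions that is solvable precisely because the cones are arranged around $M$ in convex position. I expect the main obstacle to be this last point — verifying that one can simultaneously place all $2n$ vertices on the prescribed rays while keeping the edge vectors equal to the $a_i$ up to positive scalars, i.e. checking the closing-up and consistency conditions; the first two bullets are, by contrast, mostly a careful bookkeeping of the definitions of $\pi_M$ and $\tilde T$.
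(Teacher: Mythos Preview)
The paper does not supply its own proof of this proposition: it is stated with the citation \cite{Gu.Si.92} and no argument follows (the paper immediately moves on to define the sides $v_i^*$ and then to Proposition~\ref{lem:G=S}). So there is nothing in the paper to compare your plan against directly.

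That said, your plan is a faithful reconstruction of the standard argument from \cite{Gu.Si.92}. The first two bullets are, as you say, bookkeeping: the conjugacy via $\pi_M$ and the bounded-discrepancy observation for bullet one, and the fact that the associated vertex is constant on each cone for bullet two, are exactly the right mechanisms. For the third bullet you have correctly isolated the one genuine issue---choosing the edge scalings $t_i$ so that the broken line with edges $a_1, t_2 a_2,\dots$ has its $i$-th vertex on the ray $R_i$ and closes up. In the paper this system of scalings reappears explicitly in Propositions~\ref{prop:equiv1}--\ref{prop:equiv3} (the equations $d_1+a_1=t_2 d_2$, etc.), which confirms that your formulation of the closing condition is the intended one; solvability over $\mathbb R$ is automatic (one unknown per ray, one linear constraint per cone, plus the global homogeneity), and rationality of the $t_i$ is precisely what distinguishes the quasi-rational case later on. One small wording issue: in bullet two your description ``reflect in a fixed vertex, then translate'' conflates the action on polygons with the action on vertices; the cleaner statement is simply that on $C_i$ the map $A\mapsto A'$ is translation by the fixed vector joining the two relevant vertices of $P$, which is your $a_i$.
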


The sides of $P^*$ will be denoted $v_i^*, i=1\dots 2n$.

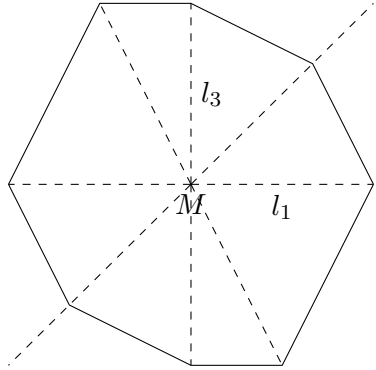
\begin{figure}[]
\begin{center}
\begin{tikzpicture}[scale=.4]
\draw[dashed] (0,0)--(6,0);
\draw[dashed] (0,0)--(6,6);
\draw[dashed] (0,0)--(0,6);
\draw[dashed] (0,0)--(-3,6);
\draw[dashed] (0,0)--(-6,0);
\draw[dashed] (0,0)--(-6,-6);
\draw[dashed] (0,0)--(0,-6);
\draw[dashed] (0,0)--(3,-6);

\draw (3,0) node[below]{$l_1$};
\draw (0,0) node[below]{$M$};
\draw (0,3) node[right]{$l_3$};

\draw (6,0)--++(-2,4)--++(-4,2)--++(-3,0)--++(-3,-6);
\draw (-6,0)--++(2,-4)--++(4,-2)--++(3,0)--++(3,6);

\end{tikzpicture}
\caption{Polygon $P^*$ associated to the quadrilateral $ABCD$}\label{polygQ}
\end{center}
\end{figure}

\subsection{Some results}
Here we explain how to find the vectors $a_1,\dots, a_n$.
\begin{definition}
For each cone $C_i$, let $d_i$ be a vector parallel to the edge $l_i$ such that
$d_i+a_i$ is colinear to $l_{i+1}$.
\end{definition}

\begin{proposition}\label{lem:G=S}
Consider the cone bounded by the lines $l_i, l_{i+1}$ 
and associated to the vector $a_i$. We have
\begin{itemize}
\item The strips associated to the lines $l_i$ and $l_{i+1}$  are consecutive for the slopes and $V_i=a_i$. 
\item The vectors $a_i, a_{i+1}$ have one vertex in common. 
\item The parallelogramm $\Sigma_i\cap \Sigma_{i+1}$ has $a_i$ for diagonal and $d_i$ for one side. The area of $\Sigma_i\cap\Sigma_{i+1}$ is equal to
 $|a_i\wedge d_i|$.
\end{itemize}
\end{proposition}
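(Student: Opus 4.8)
The plan is to analyze the local geometry around a single cone $C_i$, relating the unfolding translation vector $a_i$ to the strip data $(\Sigma_i, V_i)$ and $(\Sigma_{i+1}, V_{i+1})$, and then assemble the three bullet points. Throughout I would work in the picture centered at $M$ where the lines $l_1,\dots,l_n$ are parallel to the edges of $P$, and recall (Proposition \ref{lem:closed}) that the unfolding $\tilde T$ restricted to $C_i$ is the translation by $a_i$, with $a_{n+i} = -a_i$.

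\textbf{Step 1: identifying $a_i$ with $V_i$.} First I would unwind the definition of the unfolding on the cone $C_i$. A point $A$ in $C_i$ corresponds via $\pi_M^{-1}$ to a polygon $Q \in \mathcal S_P$; applying $\tilde T$ reflects $Q$ about the vertex $A$ of $Q$ associated to $M$. Because $A$ and $\tilde T A = A'$ both lie in the cone $C_i$ bounded by $l_i$ and $l_{i+1}$, the vertex associated to $M$ is forced to be a specific vertex of $Q$: namely the one that, together with the reflection, moves parallel to the band between directions of $l_i$ and $l_{i+1}$. Writing out the reflection $r_A$, the displacement $A' - A$ is exactly twice the vector joining the relevant vertex of $Q$ to the diametrically opposite one — which, after translating $Q$ back to a copy of $P$, is precisely twice the vector from the endpoint of the edge of slope matching $l_i$ to the farthest vertex, i.e. the definition of $V_i$. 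This simultaneously shows the strips for $l_i$ and $l_{i+1}$ are consecutive in slope order (they are the two strips whose defining edges bound the cone) and that $V_i = a_i$. The indexing convention "index strips by slopes" is what makes "consecutive" meaningful, and the fact $a_{n+i} = -a_i$ matches $V_{n+i} = -V_i$ under the antipodal identification of cones.

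\textbf{Step 2: adjacent vectors share a vertex, and the parallelogram description.} For the second bullet, I would observe that the strip $\Sigma_i$ is the band of width determined by edge $e_i$ and direction $V_i$, and $\Sigma_{i+1}$ similarly for $e_{i+1}$. Their intersection $\Sigma_i \cap \Sigma_{i+1}$ is a parallelogram because the four bounding lines come in two non-parallel directions (no parallel edges, by the standing assumption in the Remark). By Step 1, $V_i = a_i$ is parallel to one pair of these lines and $V_{i+1} = a_{i+1}$ to the other; the vector $d_i$, defined to be parallel to $l_i$ with $d_i + a_i$ collinear to $l_{i+1}$, is exactly the side of this parallelogram lying along the $l_i$-direction, while $a_i$ spans the diagonal. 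That $a_i$ and $a_{i+1}$ have a vertex in common then follows because consecutive strips $\Sigma_i, \Sigma_{i+1}$ share the vertex of $P$ that is the common endpoint of edges $e_i$ and $e_{i+1}$ (translated and scaled by $2$); concretely, the head of $V_i$ and the tail of $V_{i+1}$, or vice versa, land on the same lattice-like point of the strip configuration. Finally, the area of a parallelogram with diagonal $a_i$ and side $d_i$ is $|a_i \wedge d_i|$ — a one-line cross-product computation once the diagonal/side identification is in place (the diagonal and the side from a common vertex determine the parallelogram, since the second side is $a_i - d_i$, and $|a_i \wedge d_i| = |d_i \wedge (a_i - d_i)|$).

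\textbf{The main obstacle} I anticipate is Step 1: pinning down \emph{which} vertex of the unfolded polygon $Q$ is "associated to $M$" when $A \in C_i$, and tracking the sign conventions (orientation of $\mathbb R^2$, clockwise $T$ versus counterclockwise edge labeling from the strip definition, the $\varepsilon = \pm 1$ sheet) so that the displacement comes out as exactly $V_i$ rather than $-V_i$ or twice some other vertex-difference. This is where the definitions of the outer billiard map (Definition \ref{point}), the strip vector $V$, and the unfolding must all be reconciled; once the identification $a_i = V_i$ is secured, the remaining two bullets are essentially elementary affine geometry of intersecting strips and a cross-product area formula. I would present Step 1 carefully with an explicit figure reference (Figure \ref{vector} and Figure \ref{polygQ}) and treat Steps 2 as short consequences.
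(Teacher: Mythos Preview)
Your Step~1 is essentially the paper's argument for the first bullet: the slope of $(AM)$ lies strictly between those of $l_i$ and $l_{i+1}$, which pins the associated vertex to the edge parallel to $l_{i+1}$ and its image to the edge parallel to $l_i$, giving $V_i=a_i$.

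The gap is in Step~2, specifically your justification of the common vertex. You write that ``consecutive strips $\Sigma_i,\Sigma_{i+1}$ share the vertex of $P$ that is the common endpoint of edges $e_i$ and $e_{i+1}$''. This presumes that edges whose \emph{slopes} are consecutive are \emph{adjacent} in the polygon. That is false: slopes are directions modulo $\pi$, so as you traverse the polygon counterclockwise the edge angles increase through $2\pi$, and when reduced mod $\pi$ the ordering is scrambled. For a regular pentagon, the edge of next slope after a given edge is two steps away, not one. More to the point, the vectors $a_i$ and $a_{i+1}$ are not built from the endpoints of $e_i,e_{i+1}$ alone; each $V_i$ runs from the final vertex of $e_i$ to the \emph{farthest} vertex from $e_i$, which sits on the opposite side of $P$. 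The shared vertex of $a_i$ and $a_{i+1}$ is therefore not the one you name.

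The paper's argument for the second bullet is precisely the missing idea: if $M$ is the farthest vertex from edge $AB$, then take $N$ a neighbour of $M$; one shows by a slope-order contradiction that the farthest vertex from the edge $MN$ is $B$. Thus $(AB,M)$ and $(MN,B)$ are the data of two \emph{consecutive} strips, and the common vertex of $a_i=2\vec{BM}$ and the next $a_{i+1}$ is $B$ (or $M$), coming from this duality between an edge and the antipodal edge through its farthest vertex --- not from adjacency of $e_i$ and $e_{i+1}$. Your third-bullet argument then inherits the same gap, since identifying $a_i$ as the diagonal of $\Sigma_i\cap\Sigma_{i+1}$ requires knowing that the endpoints of $a_i$ lie on the two boundary lines of $\Sigma_{i+1}$, which is exactly what the second bullet provides.
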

\begin{proof}
Consider the cone $C_i$ with boundaries $l_i,l_{i+1}$ and a polygon $Q\in\mathcal{S}_P$. Let $A$ be the vertex of $Q$ associated to $M$.
The first thing to remark is that the slope of the line $(AM)$ is between the slopes of $l_i$ and $l_{i+1}$. Thus $A$ belongs to the edge parallel to $l_{i+1}$ and the point $\tilde TA$ belongs to the edge parallel to $l_i$. This proves $V_i=a_i$ and the first point.

Consider one strip with vertices $A,B,M$, it means that 
$M$ is the vertex that maximized the distance from $(AB)$. 
By definition the polygon is in the strip between $(AB)$ and $M+\mathbb{R}\vec{AB}$. 
Let $N$ be the vertex neighbour of $M$ in the polygon. Assume the vertex associated to $(MN)$ is not $B$, denote it $B'$. The polygon is in the strip associated to $M,N,B'$. 
Thus this strip does not intersect the segment $[AB]$. Then the line $(MN)$ has a slope bigger than $(BB')$. First part implies that in the ordering of the slopes, the slope of $(MN)$ is the consecutive of the slope of $(AB)$, contradiction.

The vector $d_i$ is on the boundary of $\Sigma_i$ by definition. Denote $a_i=v-w$ with $v, w$ vertices of $P$. By the previous point, there exists a vertex $w'$ such that  $ww'$ is on the boundary of $\Sigma_{i+1}$. Thus one side of  $\Sigma_i\cap \Sigma_{i+1}$ is given by the line $ww'$ and one side by the line $d_i$. The area of the parallelogramm is the cross product of one side by the diagonal. 
\end{proof}

\subsection{Comments}
The preceding proposition may seem awkward, since we are not studying directly the outer billiard map to obtain results on its dynamics. Nevertheless we can transform the statement in terms of the outer billiard map $T$. The map $T^2$ is a piecewise translation, defined on several subsets of $\mathbb{R}^2$. Some of them can be compact sets, see Figure \ref{translation}. Outside a compact region containing the polygon, the sets are unbounded and the translation vectors are two by two opposite. The translation vectors are exactly the vectors $V_i$, see Proposition \ref{lem:G=S}.
The dynamics of $T^2$ is simple: A point $m$ begins its trajectory by being translated  by a vector $V_i$ until it reaches another set where it moves by another vector $V_i$. Thus an orbit of point far away from $P$ looks like  the polygon $P^*$. The link between $T^2$ and the piecewise translations of vectors $V_1\dots V_n$ can be extended to a neighborhood of $P$, but it is much more complicated, see the Pinwheel theorem \cite{Sch.11}. It is related in Proposition \ref{lem:closed} to the case where $A$ is closed to $M$. It is possible that the condition $A,\tilde TA\in C_i$ is not verified. This case is treated by the Pinwheel theorem in \cite{Sch.09}. 

\begin{figure}
\begin{center}
\begin{tikzpicture}[scale=.5]

\fill (0,0)--(2,0)--(1,2)--(0,1)--cycle;
\draw (1,2) node[left]{$C$};
\draw (0,0) node[left]{$A$};
\draw (2,0) node[below]{$B$};
\draw (0,1) node[left]{$D$};
\draw (0,0)--(8,0);
\draw (4,0)--(4,6);
\draw (5,0)--++(3,3);
\draw (2,0)--++(-3,6);
\draw (0,0)--(0,-7);
\draw (0,-4)--++(2,-4);
\draw (0,1)--++(-3,-3);
\draw (0,4)--++(-3,0);
\draw (0,-1)--(1,0);
\draw (0,-2)--(-1,0);
\draw (3,4) node[below]{$2\vec{BA}$};
\draw (5,5) node[below]{$2\vec{BD}$};
\draw (8,2) node[below]{$2\vec{BC}$};
\draw (3,-3) node[below]{$2\vec{AC}$};
\draw (0.5,-7) node[below]{2$\vec{AB}$};
\draw (-1,-3) node[below]{2$\vec{DB}$};
\draw (-3,2) node[below]{$2\vec{CB}$};
\draw (-3,6) node[below]{$2\vec{CA}$};
\end{tikzpicture}
\end{center}
\caption{Definition of $T^2$}\label{translation}
\end{figure}
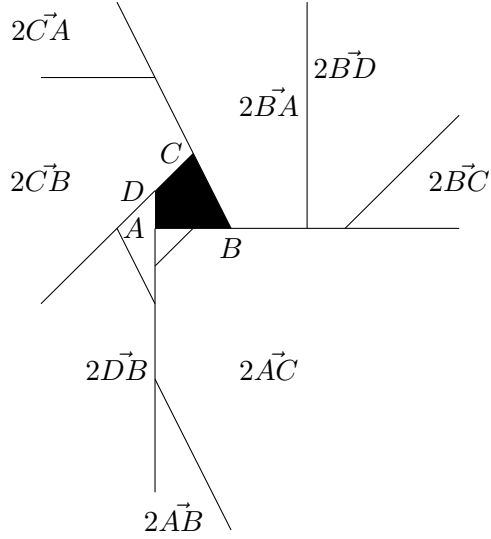

\section{Equivalence}\label{sec:equivalence}
\subsection{Statement of results}
The aim is to prove 
\begin{theorem}\label{thm:equiv}
The followings are equivalent:
\begin{itemize}
\item  $(\frac{v^*_1}{|a_1|},\dots,\frac{v_n^*}{|a_n|})$ are in $\mathbb{PQ}^n$.
\item $(|a_1\wedge a_{2}|,\dots ,|a_n\wedge a_1|)$ are commensurates.
\item $(|\Sigma_2\cap \Sigma_{1}|,\dots,|\Sigma_n\cap \Sigma_{1}|)$ are commensurates.
\end{itemize}
\end{theorem}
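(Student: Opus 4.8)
\emph{Plan of proof.} I would fix, once and for all, a unit vector $u_i$ directing the line $l_i$, with the convention $u_{n+i}=-u_i$, and set $\sigma_i:=u_i\wedge u_{i+1}$; these are invariants of $P$ alone. By the definition of $d_i$ together with the third item of Proposition~\ref{lem:G=S}, the parallelogram $\Sigma_i\cap\Sigma_{i+1}$ has consecutive sides $d_i$ (parallel to $l_i$) and $e_i:=a_i+d_i$ (parallel to $l_{i+1}$), with $a_i$ as a diagonal; writing $d_i=\delta_i u_i$ and $e_i=c_i u_{i+1}$ one gets $a_i=c_i u_{i+1}-\delta_i u_i$ and
\[
|\Sigma_i\cap\Sigma_{i+1}|=|a_i\wedge d_i|=|c_i\delta_i\sigma_i|.
\]
Moreover $d_i$ (resp. $e_i$) is the chord of $\Sigma_i\cap\Sigma_{i+1}$ that crosses the strip $\Sigma_{i+1}$ (resp. $\Sigma_i$), so $|\delta_i|=w_{i+1}/|\sigma_i|$ and $|c_i|=w_i/|\sigma_i|$, where $w_j$ is the width of $\Sigma_j$; equivalently $|\Sigma_i\cap\Sigma_{i+1}|=w_iw_{i+1}/|\sigma_i|$ and $|c_i/\delta_i|=w_i/w_{i+1}$. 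The whole argument then amounts to reading the three conditions off relations among the scalars $c_i,\delta_i,\sigma_i$, using that a cyclic tuple is commensurate precisely when each of its consecutive ratios is rational.

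\emph{Step 1: $(1)\Leftrightarrow(3)$.} By Proposition~\ref{lem:closed} the $2n$ vertices of $P^*$ lie on the $2n$ half-lines bounding the cones, so I would write the $k$-th vertex as $P_k=t_k u_k$. Since $v_k^*=P_{k+1}-P_k$ is colinear with $a_k$ it equals $\lambda_k a_k$; wedging with $u_k$ gives $\lambda_k=t_{k+1}/c_k$, and the colinearity of $P_{k+1}-P_k$ with $a_k$ gives $t_{k+1}/t_k=c_k/\delta_k$. Eliminating the $t_k$ yields $\lambda_{i+1}/\lambda_i=c_i/\delta_{i+1}$, hence by the width relations
\[
\left|\frac{\lambda_{i+1}}{\lambda_i}\right|=\frac{w_i|\sigma_{i+1}|}{w_{i+2}|\sigma_i|}=\frac{|\Sigma_i\cap\Sigma_{i+1}|}{|\Sigma_{i+1}\cap\Sigma_{i+2}|},
\]
so $|\lambda_i|\cdot|\Sigma_i\cap\Sigma_{i+1}|$ is independent of $i$; call it $\kappa$ (whence also $|t_j|=\kappa/w_j$). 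Since $v_i^*/|a_i|$ has norm $|\lambda_i|$, the tuple in $(1)$ and the tuple in $(3)$ are carried onto one another by $t\mapsto\kappa/t$, and therefore one is commensurate iff the other is.

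\emph{Step 2: $(2)\Leftrightarrow(3)$.} From $|t_j|=\kappa/w_j$ I would deduce that, for any $j\neq k$, the triangle $MP_jP_k$ has area $\tfrac12|t_jt_k||u_j\wedge u_k|=\kappa^2/(2|\Sigma_j\cap\Sigma_k|)$, where $|\Sigma_j\cap\Sigma_k|=w_jw_k/|u_j\wedge u_k|$ denotes the area of the (in general non‑consecutive) parallelogram $\Sigma_j\cap\Sigma_k$. Writing $a_i\wedge a_{i+1}=\bigl(v_i^*\wedge v_{i+1}^*\bigr)/(\lambda_i\lambda_{i+1})$ and expanding $v_i^*\wedge v_{i+1}^*=(P_{i+1}-P_i)\wedge(P_{i+2}-P_{i+1})$ over $M$ into the triangles $MP_iP_{i+1}$, $MP_{i+1}P_{i+2}$, $MP_iP_{i+2}$, then substituting these areas together with $|\lambda_i\lambda_{i+1}|=\kappa^2/(|\Sigma_i\cap\Sigma_{i+1}|\,|\Sigma_{i+1}\cap\Sigma_{i+2}|)$, the constant $\kappa$ cancels and one is left with the identity
\[
|a_i\wedge a_{i+1}|=|\Sigma_i\cap\Sigma_{i+1}|+|\Sigma_{i+1}\cap\Sigma_{i+2}|-\frac{|\Sigma_i\cap\Sigma_{i+1}|\,|\Sigma_{i+1}\cap\Sigma_{i+2}|}{|\Sigma_i\cap\Sigma_{i+2}|}.
\]
Given this formula, $(2)$ follows from $(3)$ as soon as the skip‑one intersections $|\Sigma_i\cap\Sigma_{i+2}|$ are commensurate with the consecutive ones, and conversely $(3)$ is recovered from $(2)$ by reading the identity as a recursion for the $|\Sigma_i\cap\Sigma_{i+1}|$. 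I expect the real obstacle — and the only place where the precise ordering of strips and cones supplied by Proposition~\ref{lem:G=S} is genuinely needed — to be exactly this upgrade: showing that, for the strips of a convex polygon, commensurability of consecutive strip intersections already forces commensurability of all of them. Everything else is the routine bookkeeping of cyclic commensurate tuples.
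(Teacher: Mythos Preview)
Your Step~1 is correct and is a pleasant direct computation: the relation $|\lambda_i|\cdot|\Sigma_i\cap\Sigma_{i+1}|=\kappa$ is exactly what one gets by unwinding the construction of $P^*$, and it gives $(1)\Leftrightarrow(3)$ without passing through any auxiliary system. The paper instead routes everything through a common hub, namely the existence of a rational solution $(t_1,\dots,t_n)$ to
\[
d_1+a_1=t_2d_2,\qquad d_1+a_1+t_2a_2=t_3d_3,\qquad\dots,\qquad d_1+a_1+\dots+t_na_n=-d_1,
\]
and proves separately that each of $(1),(2),(3)$ is equivalent to this (Propositions~\ref{prop:equiv1}, \ref{prop:equiv2}, \ref{prop:equiv3}). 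Your route to $(1)\Leftrightarrow(3)$ is shorter and more transparent.

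Step~2, however, has a genuine gap, and it is precisely the one you flag. Your identity
\[
|a_i\wedge a_{i+1}|=|\Sigma_i\cap\Sigma_{i+1}|+|\Sigma_{i+1}\cap\Sigma_{i+2}|-\frac{|\Sigma_i\cap\Sigma_{i+1}|\,|\Sigma_{i+1}\cap\Sigma_{i+2}|}{|\Sigma_i\cap\Sigma_{i+2}|}
\]
is correct, but it introduces the skip--one areas $|\Sigma_i\cap\Sigma_{i+2}|$, and neither direction of $(2)\Leftrightarrow(3)$ goes through without controlling them. In the direction $(3)\Rightarrow(2)$ you need that commensurability of consecutive strip intersections forces commensurability of the skip--one intersections; in the direction $(2)\Rightarrow(3)$ your ``recursion'' has, at each step, two unknowns ($A_{i+1}$ and $B_i$) and one equation, so it does not determine the $A_i$ up to a rational factor. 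The statement you would need---that for the strips of a convex polygon, commensurability of the consecutive $|\Sigma_i\cap\Sigma_{i+1}|$ implies commensurability of all $|\Sigma_j\cap\Sigma_k|$---is in fact \emph{equivalent} to the theorem (plug the theorem back into your identity to see that the $B_i$ must then be commensurate with the $A_j$), so you have reduced the problem to itself.

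What makes the paper's argument work is that the rational-solution system encodes strictly more than the area ratios: once $(t_2,\dots,t_n)\in\mathbb{Q}^{n-1}$ exists, each $d_i$ is a \emph{rational} linear combination of $a_1,\dots,a_n$, and then Proposition~\ref{lem:G=S} (consecutive $a_i$'s share a vertex, so the edges of $P$ are themselves integer combinations of the $a_i$'s) lets one compare the triangles $BCD$ and $BDF$ in Figure~\ref{fig:cones} and extract the commensurability of $|a_i\wedge a_{i+1}|$ with $|a_{i+1}\wedge a_{i+2}|$. In your language, this is exactly the missing control on $|\Sigma_i\cap\Sigma_{i+2}|$: the shared-vertex structure of the spokes pins down the cross term, but only after one has promoted condition~$(3)$ to the stronger rational-solution statement. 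If you want to keep your scalar setup, the cleanest fix is to prove directly that $(3)$ implies the system has a rational solution (this is the content of Proposition~\ref{prop:equiv1}, and in your variables it amounts to showing that the numbers $t_i=c_{i-1}/\delta_i$ are rational once the consecutive $A_i$ are commensurate), and then feed that into your identity.
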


\begin{remark}
The first point is the initial definition of a quasi-rational polygon given in \cite{Gu.Si.92}. The third is the definition by Schwartz in \cite{Sch.09}.
\end{remark}
To do this we will prove the three following propositions. The theorem will be a clear consequence with help of Proposition \ref{lem:G=S}.
\begin{proposition}\label{prop:equiv1}
The followings are equivalent:
\begin{itemize}
\item There exists a rational solution $(t_1,\dots, t_n)$ to
$$\begin{cases}
d_1+a_1=t_2d_2\\
d_1+a_1+t_2a_2=t_3d_3\\
d_1+a_1+\dots t_na_n=-d_1
\end{cases}$$
\item $(|a_1\wedge d_1|,\dots,|a_n\wedge d_n|)$ are commensurates.
\end{itemize}
\end{proposition}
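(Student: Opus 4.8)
The plan is to solve the linear system explicitly: it is triangular once one uses the defining property of the $d_i$. Set $t_1=1$; since $t_1$ does not occur in the system this is harmless. By induction on $k$ the left‑hand side of the $k$‑th equation equals $t_k(d_k+a_k)$: for $k=1$ it is $d_1+a_1$, and if $d_1+a_1+t_2a_2+\dots+t_{k-1}a_{k-1}=t_kd_k$ then the next left‑hand side is $t_kd_k+t_ka_k=t_k(d_k+a_k)$. By definition $d_k+a_k$ is colinear to $l_{k+1}$, hence to $d_{k+1}$, so write $d_k+a_k=\alpha_kd_{k+1}$ for the unique scalar $\alpha_k$, using the cyclic conventions $a_{n+i}=-a_i$ and $d_{n+1}=-d_1$ (the second is forced, since $d_{n+1}$ must be parallel to $l_1$ with $d_{n+1}-a_1$ colinear to $l_2$, and $-d_1$ is the only such vector). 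The $k$‑th equation then reads $t_{k+1}d_{k+1}=t_k(d_k+a_k)$, i.e.\ $t_{k+1}=\alpha_kt_k$, so $t_k=\alpha_1\cdots\alpha_{k-1}$; and the final equation reads $t_n(d_n+a_n)=-d_1$, i.e.\ $(\alpha_1\cdots\alpha_n)\,d_{n+1}=-d_1$, which is compatible exactly when $\alpha_1\cdots\alpha_n=1$. This cyclic identity merely says that the polygon $P^*$ of Proposition~\ref{lem:closed} closes up — after $n$ of its sides one reaches the central symmetric $-d_1$ of the starting vertex $d_1$ — so it holds for every polygon under our standing hypotheses. Hence the system is always solvable, and it admits a rational solution if and only if every $\alpha_i$ is rational.

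The second half is to identify ``all $\alpha_i\in\mathbb{Q}$'' with commensurability of the areas $|a_i\wedge d_i|$. From $d_i+a_i=\alpha_id_{i+1}$, wedging with $d_i$ gives $a_i\wedge d_i=\alpha_i(d_{i+1}\wedge d_i)$, so $|a_i\wedge d_i|=|\alpha_i|\,|d_i\wedge d_{i+1}|$; wedging with $d_{i+1}$ gives $a_i\wedge d_{i+1}=-(d_i\wedge d_{i+1})$, hence $|a_i\wedge d_i|=|\alpha_i|\,|a_i\wedge d_{i+1}|$. Using Proposition~\ref{lem:G=S} — the two sides of $\Sigma_i\cap\Sigma_{i+1}$ are $d_i$ and $e_i:=d_i+a_i=\alpha_id_{i+1}$, while the side of $\Sigma_{i-1}\cap\Sigma_i$ lying along $l_i$ is $e_{i-1}=\alpha_{i-1}d_i$ — one expresses the consecutive ratio $|a_{i+1}\wedge d_{i+1}|/|a_i\wedge d_i|$ in terms of the $\alpha_j$ alone, and the cyclic constraint $\alpha_1\cdots\alpha_n=1$ then lets one recover each $\alpha_i$ from these ratios. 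Thus rationality of all $\alpha_i$ makes every consecutive ratio rational (commensurability), and conversely commensurability together with $\alpha_1\cdots\alpha_n=1$ forces each $|\alpha_i|$, hence each $\alpha_i$, to be rational. Since the $n$ consecutive ratios multiply to $1$, the wrap‑around ratio $|a_1\wedge d_1|/|a_n\wedge d_n|$ is automatically rational once the others are.

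I expect the real difficulty to lie precisely in this last step: one must be careful that commensurability of the areas is equivalent to rationality of the individual $\alpha_i$, not merely to commensurability of the $\alpha_i$. This is where the rigidity of the $d_i$ has to be exploited — each $d_i$ is a definite vector, tied to its neighbours by $d_i+a_i=\alpha_id_{i+1}$, and the $\alpha_i$ are constrained by $\alpha_1\cdots\alpha_n=1$. The rest is bookkeeping: tracking signs, the cyclic index, and the conventions $a_{n+i}=-a_i$, $d_{n+1}=-d_1$ (and that $|\cdot\wedge\cdot|$ is the unsigned area), organized so that the wrap‑around equation of the system matches the wrap‑around ratio of the area tuple.
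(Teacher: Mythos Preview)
Your reduction of the system to the recursion $t_{k+1}=\alpha_k t_k$ with $d_k+a_k=\alpha_k d_{k+1}$ is correct and is exactly the algebraic content of the paper's inductive step; the system has a rational solution iff every $\alpha_i\in\mathbb{Q}$.

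The gap is in the second half. Your wedging gives $|a_i\wedge d_i|=|\alpha_i|\,|d_i\wedge d_{i+1}|$, but you never relate $|d_i\wedge d_{i+1}|$ to $|a_{i+1}\wedge d_{i+1}|$, so the consecutive ratio is not yet ``in terms of the $\alpha_j$ alone'' as you assert. The cyclic identity $\alpha_1\cdots\alpha_n=1$ does not rescue this: without further input the quantities $|d_i\wedge d_{i+1}|$ are extra unknowns and you cannot extract the individual $\alpha_i$ from the area ratios. What is missing is the second item of Proposition~\ref{lem:G=S}: consecutive spokes $a_i,a_{i+1}$ share a vertex of $P$, and the edge joining their other endpoints is the side of $P$ along $l_{i+1}$, so $a_i+a_{i+1}$ is parallel to $d_{i+1}$. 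Wedging this with $d_{i+1}$ yields $a_i\wedge d_{i+1}=-a_{i+1}\wedge d_{i+1}$, hence $|d_i\wedge d_{i+1}|=|a_i\wedge d_{i+1}|=|a_{i+1}\wedge d_{i+1}|$. Substituting, the consecutive ratio collapses to
\[
\frac{|a_i\wedge d_i|}{|a_{i+1}\wedge d_{i+1}|}=|\alpha_i|,
\]
and the equivalence follows immediately, with no appeal to the cyclic product needed.

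This is precisely the paper's argument, carried out there with explicit vertex labels: the step $|\vec{GC}\wedge\vec{CD}|=|\vec{GC}\wedge\vec{BD}|$ in the paper is exactly the relation $a_1+a_2\parallel l_2$ in disguise (since $a_1=2\vec{DC}$, $a_2=2\vec{BD}$, and $\vec{CD}-\vec{BD}=\vec{CB}$ lies along $l_2$). So your approach is the same as the paper's once this one line is supplied; without it the argument does not close.
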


\begin{proposition}\label{prop:equiv2}
The followings are equivalent:
\begin{itemize}
\item There exists a rational solution $(t_1,\dots, t_n)$ to
$$\begin{cases}
d_1+a_1=t_2d_2\\
d_1+a_1+t_2a_2=t_3d_3\\
d_1+a_1+\dots t_na_n=-d_1
\end{cases}$$
\item $P$ is a quasi-rational polygon.
\end{itemize}
\end{proposition}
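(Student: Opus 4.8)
The plan is to connect the linear system to the commensurability of the areas $|\Sigma_i \cap \Sigma_{i+1}|$ via Proposition~\ref{lem:G=S}, and then to reduce to Proposition~\ref{prop:equiv1}. Recall from Proposition~\ref{lem:G=S} that $|\Sigma_i \cap \Sigma_{i+1}| = |a_i \wedge d_i|$, and that $a_i = V_i$. By the very definition of quasi-rationality, $P$ is quasi-rational iff $(|V_1 \wedge V_2|, \dots, |V_n \wedge V_1|)$ are commensurate. So the real content is to show that the commensurability of the sequence $(|a_1 \wedge d_1|, \dots, |a_n \wedge d_n|)$ is equivalent to the commensurability of $(|a_1 \wedge a_2|, \dots, |a_n \wedge a_1|)$; combined with Proposition~\ref{prop:equiv1} (which already equates the existence of a rational solution of the system with commensurability of the $|a_i \wedge d_i|$) this gives the claim. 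Thus Proposition~\ref{prop:equiv2} should follow formally once this area-comparison is in place, and indeed the statement of the excerpt flags that ``the theorem will be a clear consequence with help of Proposition~\ref{lem:G=S}''.

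First I would make precise the relationship between $d_i$ and $a_i$. By definition $d_i$ is parallel to $l_i$ and $d_i + a_i$ is colinear to $l_{i+1}$; since $a_i$ and $a_{i+1}$ are consecutive translation vectors sharing a common vertex (second bullet of Proposition~\ref{lem:G=S}), and $d_{i+1}$ is parallel to $l_{i+1}$, there is a scalar relation of the form $d_i + a_i = \lambda_i d_{i+1}$ for some nonzero real $\lambda_i$. Taking the wedge of the system's first equation with $d_2$, the second with $d_3$, and so on, one extracts each $t_k$ (when it exists) as a ratio of cross products, and conversely the $\lambda_i$ are determined by the geometry. The key computation is then: $|a_{i} \wedge a_{i+1}|$ can be rewritten, using $a_{i+1} = \lambda_i d_{i+1} - d_i - (\text{part of }a_i)$-type identities together with $a_i \wedge d_i$ appearing as the diagonal-times-side area, so that the ratio $|a_i \wedge a_{i+1}| / |a_i \wedge d_i|$ is rational iff the relevant $\lambda$'s are rational. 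Running this around the cycle $i = 1, \dots, n$ shows the two commensurability conditions coincide up to an overall rational factor, which is all that commensurability requires.

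More concretely, I would argue as follows. Since $a_i \wedge d_i \ne 0$ (the parallelogram $\Sigma_i \cap \Sigma_{i+1}$ is nondegenerate), $\{a_i, d_i\}$ is a basis, so write $a_{i+1} = \alpha_i a_i + \beta_i d_i$ and $d_{i+1} = \gamma_i a_i + \delta_i d_i$. Then $|a_i \wedge a_{i+1}| = |\beta_i|\,|a_i \wedge d_i|$ and $|a_{i+1} \wedge d_{i+1}| = |\alpha_i \delta_i - \beta_i \gamma_i|\,|a_i \wedge d_i|$. The defining colinearity relations ($d_i$ parallel to $l_i$, $d_i + a_i$ parallel to $l_{i+1}$, and the same shifted by one) pin down $\gamma_i, \delta_i$ and tie $\alpha_i, \beta_i$ together; the upshot is that the full set of ratios $|a_{i+1} \wedge d_{i+1}| / |a_i \wedge d_i|$ is rational iff the full set of ratios $|a_{i+1} \wedge a_{i+2}| / |a_i \wedge a_{i+1}|$ is rational. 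Chaining these around the loop of length $n$ and invoking Proposition~\ref{prop:equiv1}, I conclude that a rational solution of the system exists iff $(|a_1 \wedge a_2|, \dots, |a_n \wedge a_1|)$ are commensurate, which by $a_i = V_i$ and the definition is exactly quasi-rationality of $P$.

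The main obstacle I expect is bookkeeping the cyclic nature of the conditions: ``commensurate'' is a condition on a cyclic sequence of ratios (including the wrap-around ratio $\alpha_1/\alpha_n$), and one must check that no spurious irrationality is introduced when passing from the $d$-areas to the $a$-areas and back, i.e. that the product of all the conversion factors around the cycle is itself rational (in fact it should be $1$, or at worst a fixed rational determined by the combinatorics of how the $l_i$'s are ordered by slope). Verifying that the geometric relations among $d_i, d_{i+1}, a_i, a_{i+1}$ force the conversion factors to multiply to a rational number around the full cycle — rather than merely pairwise — is the delicate point; everything else is linear algebra in the bases $\{a_i, d_i\}$ together with repeated application of Proposition~\ref{lem:G=S}.
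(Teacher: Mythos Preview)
Your high-level strategy---route through Proposition~\ref{prop:equiv1} and then argue that commensurability of $(|a_i\wedge d_i|)_i$ is equivalent to commensurability of $(|a_i\wedge a_{i+1}|)_i$---is a legitimate and genuinely different path from the paper's. The paper does \emph{not} invoke Proposition~\ref{prop:equiv1} here; it works directly on a labelled configuration $A,B,C,D,F,G,H$ and shows that a rational solution forces the triangles $BCD$ and $DBF$ (which carry $|a_1\wedge a_2|$ and $|a_2\wedge a_3|$) to have rational area ratio, by comparing their orthogonal heights $h_C,h_F$ onto the common base $DB$.

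The gap in your sketch is the sentence ``the defining colinearity relations \ldots\ pin down $\gamma_i,\delta_i$ and tie $\alpha_i,\beta_i$ together; the upshot is that \ldots''. The only colinearity you have is $d_i+a_i\parallel d_{i+1}$, which yields $a_i=\mu_i d_{i+1}-d_i$ and hence $|a_i\wedge d_i|=|\mu_i|\,|d_i\wedge d_{i+1}|$; but it imposes \emph{no} relation between $a_{i+1}$ and the pair $(a_i,d_i)$. In your notation $|a_i\wedge a_{i+1}|/|a_i\wedge d_i|=|\beta_i|$, and nothing in the abstract setup makes $\beta_i$ rational. The ingredient you are missing is exactly what drives the paper's argument: by Proposition~\ref{lem:G=S} consecutive $a_i$ share a vertex of $P$, so every vertex---and hence every edge of $P$, and hence every $d_i$ (which is cut out by lines through edges)---is a \emph{rational} linear combination of the $a_j$. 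The paper phrases this as ``Solving system shows that $d_i$ is a rational linear sum of $a_1,\dots,a_n$'' together with ``the edges of $P$ are rational combination of $a_1,\dots,a_n$'', and it is precisely this that turns rationality of the $t_k$ into rationality of $h_C/h_F$. Your pure linear-algebra computation in the bases $\{a_i,d_i\}$ cannot see this, because it treats $a_{i+1}$ as an arbitrary vector rather than one pinned to the vertex set of $P$; once you feed in that combinatorial fact, your cycle of conversion factors will indeed close up rationally and your route goes through.
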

 
\begin{proposition}\label{prop:equiv3}
The followings are equivalent:
\begin{itemize}
\item There exists a rational solution $(t_1,\dots, t_n)$ to 
$$\begin{cases}
d_1+a_1=t_2d_2\\
d_1+a_1+t_2a_2=t_3d_3\\
d_1+a_1+\dots t_na_n=-d_1
\end{cases}$$
\item $(\frac{v_1^*}{|a_1|},\dots,\frac{v_n^*}{|a_n|})$ are in $\mathbb{PQ}^n$.

\end{itemize}
\end{proposition}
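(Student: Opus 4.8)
The plan is to unpack the geometric meaning of the system of equations and of the polygon $P^*$ so that the two conditions become two statements about the same rational vector. Recall from Proposition \ref{lem:closed} that $P^*$ is a $2n$-gon whose sides $v_i^*$ satisfy $v_i^* \parallel a_i$, with $a_{n+i} = -a_i$; hence $P^*$ is centrally symmetric and is completely determined (up to translation) by the $n$ scalars $\lambda_i \ge 0$ with $v_i^* = \lambda_i a_i / |a_i|$ (taking $v_i^*$ to mean the length of the $i$-th side, as in the statement of Theorem \ref{thm:equiv}), subject to the single closing relation $\sum_{i=1}^n \lambda_i \, a_i/|a_i| = 0$ coming from the fact that the boundary of a polygon closes up. So $\left(\frac{v_1^*}{|a_1|}, \dots, \frac{v_n^*}{|a_n|}\right) \in \mathbb{PQ}^n$ means exactly that the (projectively unique, by convexity and the prescribed edge directions) solution $(\lambda_1,\dots,\lambda_n)$ of $\sum \lambda_i a_i/|a_i| = 0$ can be scaled to lie in $\mathbb{Q}^n$.

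Next I would read the system in the proposition the same way. Setting $s_1 = 1$ and $s_i = t_i$ for $i\ge 2$, the equations say $d_1 + \sum_{j=1}^{k-1} s_j a_j = s_k d_k$ for $k = 2,\dots,n$ together with the wrap-around $d_1 + \sum_{j=1}^n s_j a_j = -d_1$. Telescoping, the last equation is $\sum_{j=1}^n s_j a_j = -2 d_1$, and more generally the partial sums $\sum_{j<k} s_j a_j$ are pinned, modulo the line $\mathbb{R}d_k$, to $-d_1$. Geometrically this is precisely the statement that the broken line with successive edge-vectors $s_1 a_1, \dots, s_n a_n$, starting appropriately relative to $d_1$, closes up into a $2n$-gon parallel to $P^*$: each vertex lies on the correct line $l_k$ (the direction $d_k$ is parallel to $l_k$ by definition of $d_k$), and the closing condition $\sum s_j a_j = -2d_1$ is the central-symmetry closure. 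Thus a solution $(t_2,\dots,t_n)$ of the system is the same data as the coefficients expressing the side lengths of a polygon with sides $\parallel a_i$ passing through the prescribed cones; comparing with the previous paragraph, $s_j$ is proportional to $\lambda_j/|a_j|$, equivalently $s_j |a_j|$ is proportional to $v_j^*$. Hence "the system has a rational solution" $\iff$ "the $s_j$ can be taken rational" $\iff$ "the ratios $s_j/s_1$ are rational" $\iff$ "the ratios $\frac{v_j^*/|a_j|}{v_1^*/|a_1|}$ are rational" $\iff$ $\left(\frac{v_1^*}{|a_1|},\dots,\frac{v_n^*}{|a_n|}\right)\in\mathbb{PQ}^n$.

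The two routine points to nail down are: (i) the system has a solution at all, and it is unique — this follows because the $n$ equations are an upper-triangular-in-the-unknowns linear system once one projects the $k$-th equation onto a direction transverse to $d_k$ (solvability uses that $a_k$ is not parallel to $d_k$, which is exactly the defining property "$d_k + a_k$ colinear to $l_{k+1}$" combined with $d_k \parallel l_k$ and $l_k \nparallel l_{k+1}$), and the last equation is then automatically consistent because both the broken line and $P^*$ are the unique closed convex curve with the prescribed edge directions meeting the prescribed lines; (ii) the identification of $d_k$ with the direction of the $k$-th vertex of $P^*$ and of $s_k a_k$ with $v_k^*$, which is where Proposition \ref{lem:G=S} (giving $V_i = a_i$ and the combinatorics of consecutive strips) and Proposition \ref{lem:closed} (giving the existence and edge-directions of $P^*$) are invoked. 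The main obstacle is bookkeeping: making the correspondence between the indices $i$ of the cones $C_i$, the strips $\Sigma_i$, the lines $l_i$, and the sides $v_i^*$ of $P^*$ genuinely coherent, since there are $n$ of some objects and $2n$ of others, and the central symmetry $a_{n+i} = -a_i$ must be used to fold the $2n$-gon $P^*$ down to the $n$ equations of the system. Once the dictionary is fixed, both sides of the equivalence are literally "these $n$ numbers have pairwise rational ratios," and the proposition follows.
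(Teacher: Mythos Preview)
Your proposal is correct and follows essentially the same approach as the paper: both arguments identify a solution $(1,t_2,\dots,t_n)$ of the system with the edge-vectors of (a homothetic copy of) $P^*$, so that $t_i$ is proportional to $v_i^*/|a_i|$ and rationality of one tuple is rationality of the other. The paper's version is much terser --- it simply observes that the broken line $d_1,\,d_1+a_1,\,\dots$ \emph{is} a $P^*$ with edges $a_1,t_2a_2,\dots,t_na_n$, and for the converse takes a homothetic image of a given $P^*$ to place a vertex at $d_1$ --- whereas you spell out the existence/uniqueness of the solution and flag the $n$ versus $2n$ bookkeeping, which the paper leaves implicit.
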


\subsection{Proof of Proposition \ref{prop:equiv1}}
The proof is based on Figure \ref{fig:cruc}. Consider a polygon such that the points $A,B,C,D,E$ are vertices labelled in such a way that the slopes of edges are in the increasing order $AD, BC, BE$. 
Also assume we have  $a_1=2\vec{DC}$. 
Then Proposition \ref{lem:G=S} implies that $a_2=2\vec{BD}$. 
Let $G$ be the intersection point of $(AD)$ and $(BC)$, and let $H$ a point on the line $(BC)$ such that $(HD)$ is parallel to $(BE)$. 
Then we have $d_1=2\vec{GD}, d_2=2\vec{HB}$. Moreover $\Sigma_1\cap \Sigma_2$ is defined by the triangle $GCD$, and  $\Sigma_3\cap \Sigma_2$ is defined by $BDH$.
Let $r$ be the real number such that $\vec{BH}=r\vec{GC}$. 
We see that $\Sigma_1$ is given by $((AD), 2\vec{DC})$, $\Sigma_2=((BC), 2\vec{BD})$. The intersection of the two strips $\Sigma_1, \Sigma_2$ has $DC$ as a diagonal. A similar computation gives the intersection of the strips $\Sigma_3, \Sigma_2$. The two parallelograms are constructed on triangles $BHD, GCD$. We have $$|BHD|=|\vec{BH}\wedge \vec{BD}|$$
$$|GCD|=|\vec{GC}\wedge \vec{CD}|=|\vec{GC}\wedge\vec{BD}|=r|BHD|.$$
The ratio of the areas of the two parallelograms is the same as the ratio of the area of triangles, thus it is equal to $r$.
Thus we have proved:
$$r\in \mathbb{Q}\Longleftrightarrow \frac{|\Sigma_1\cap \Sigma_2|}{|\Sigma_2\cap\Sigma_3|}\in \mathbb{Q}.$$
Note that $r$ is equal to the inverse of $t_2$ in the first part of Proposition. 
The proof of Proposition follows by induction.
\begin{figure}
\begin{center}
\begin{tikzpicture}
\draw (0.5,0)--(2,0);
\draw (2,0)--(1,2);
\draw (1,2)--(0,1);
\draw (0,1)--++(-0.125,-0.5);
\draw[dashed] (-1,0)--(0,1);
\draw[dashed] (-1,0)--(0.5,0);
\draw[dashed] (3,4)--(2,0);
\draw[dashed] (3,4)--(1,2);
\draw (0.5,0) node{$\bullet$} node[below]{$A$};
\draw (2,0) node{$\bullet$} node[below]{$D$}; 
\draw (0,1) node{$\bullet$} node[left]{$ B$};
\draw (1,2) node{$\bullet$} node[above]{$ C$};
\draw (-1,0) node{$\bullet$} node[left]{$G$};
\draw (3,4) node{$\bullet$} node[right]{$H$};
\draw (-0.125,0.5)  node{$\bullet$} node[right]{$E$};
\end{tikzpicture}
\begin{tikzpicture}[scale=0.5]
\draw (2,0) node[below]{$D$}; 
\draw (0,1) node[left]{$B$};
\draw (3,4) node[right]{$H$};
\draw (-1,0)--(4,5);
\draw (0,1)--(2,0);
\draw (1,-4)--(3.5,6);
\draw (-1,-3)--(4,2);
\draw (-1,-3)--(1,5);
\draw (2,4) node[above]{$\Sigma_3$};
\draw (3,2.4) node[right]{$\Sigma_2$};
\fill [gray] (0,1)--(3,4)--(2,0)--(-1,-3);
\end{tikzpicture}
\caption{Proof of Proposition \ref{prop:equiv1} }\label{fig:cruc}
\end{center}
\end{figure}
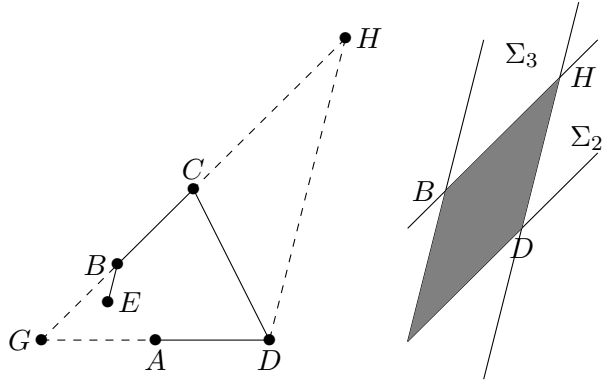

\subsection{Proof of Proposition \ref{prop:equiv2}}
The proof is based on Figure \ref{fig:cones}.
Consider an edge $AD$, and the associated vector $a_1=2\vec{DC}$. By Proposition \ref{lem:G=S} the vector $a_2$ is equal to $2\vec{BD}$ with $BC$ edge of the polygon, and $a_3=2\vec{FB}$, with $DF$ edge of the polygon. 
Let us call $G$ the intersection of $(CB)$ with $(AD)$, and $H$ the intersection of $(CB)$ and $(DF)$. Then $d_1=2\vec{GD}, d_2=2\vec{HB}$. 
Assume that the first item of Proposition \ref{prop:equiv2} holds. Then there exist $r,r'\in\mathbb{Q}$ such that $\vec{GC}=r\vec{HB}, \vec{HD}=r'\vec{DF}$. Solving system shows that $d_i$ is a rational linear sum of $a_1,\dots, a_n$.  By Proposition \ref{lem:G=S}, the edges of $P$ are rational combination of $a_1,\dots, a_n$, the assumption implies: $\vec{GC}=q\vec{BC}, \vec{HD}=q'\vec{FD}$ with $q,q'\in \mathbb{Q}$. Now the relations $\vec{GC}=q\vec{BC}=r\vec{HB}$ imply $\vec{HB}=q"\vec{CB}$ with $q''$ rational number. 
For a point $M$, denote $h_M$ the length of the orthogonal projection of $M$ on $(DB)$. The relations $\vec{HB}=q"\vec{CB}, \vec{HD}=q\vec{FD}$ gives  $h_H=q" h_C$ and $h_F=q h_H$.
By Proposition \ref{lem:G=S}, the areas $|a_1\wedge a_2|, |a_2\wedge a_3|$ are given by areas of triangles $BCD, DBF$. The ratio of these areas is  equal to the ratio between $h_C$ and $h_F$. Thus the areas are commensurates. The other part of the proof is similar.

 \begin{figure}[h]
 \begin{center}
\begin{tikzpicture}
\draw (0,0)--(2,0);
\draw (2,0)--(2.5,1.5);
\draw (2.5,1.5)--(1,2);
\draw (1,2)--(0,1);
\draw (0,1)--(0,0);
\draw (4,-1)--(-2,2);
\draw[dashed] (-1,0)--(0,1);
\draw[dashed] (-1,0)--(0,0);
\draw[dashed] (0,1)--(4,5);
\draw[dashed] (3,4)--(1,2);
\draw[dashed] (2,0)--(4,6);
\draw[dashed] (-1,0)--++(0.6,1.2);
\draw[dashed] (3.5,4.5)--++(-2.1,-4.2);
\draw (0,0) node[below]{$A$};
\draw (2,0) node[below]{$D$}; 
\draw (0,1) node[above]{$ B$};
\draw (1,2) node[above]{$ C$};
\draw (-1,0) node[left]{$G$};
\draw (3.5,4.5) node[above]{$H$};
\draw (2.5,1.5) node[right]{$F$};
\end{tikzpicture}
\caption{Proof of Proposition \ref{prop:equiv2}}\label{fig:cones}
\end{center}
\end{figure}

\subsection{Proof of Proposition \ref{prop:equiv3}}
The proof is based on Figure \ref{polygQ}. 
By Proposition \ref{prop:equiv2} we know that the first statement is equivalent to the fact that $P$ is a quasi-rational polygon. If the system has a rational solution, then the polygon defined by $d_1,d_1+a_1,\dots, d_1+a_1+\dots t_na_n$ is some polygon $P^*$. The edges $v_1^*,\dots, v_n^*$ of this polygon are equal to $a_1, t_2a_2,\dots, t_na_n$, thus the first implication is proved.
  
Conversly, consider the point $M$ on $l_1$ such that $\vec{OM}=d_1$. By hypothesis, there exists a polygon with sides $r_1a_1, \dots r_na_n$ with rational numbers $r_i, i\leq n$.  Thus there exists an homothetic image of this polygon with vertex $M$, and all the edges fulfilling the same condition. This gives a rational solution of the system.

This proposition can be reformulated in
\begin{corollary}\label{lem:qr-equiv}
The polygon is quasi-rational if and only if :
there exists rational numbers $t_2,\dots,t_n$ and $M\in\mathbb{R}^2$ such that
$$
\begin{cases}
M\in l_1\\ 
M+a_1\in l_2\\ M+a_1+t_2a_2\in l_2\\ 
M+a_1+\dots+t_ia_i\in l_{i}\\ 
M+a_1+\dots+t_na_n=-M
\end{cases}$$
\end{corollary}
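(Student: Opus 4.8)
The plan is to recognise the corollary as Proposition~\ref{prop:equiv2} (equivalently Proposition~\ref{prop:equiv3} together with Theorem~\ref{thm:equiv}) rewritten in affine coordinates centred at the apex of the cones. Concretely, I would fix as origin the point through which all the lines $l_1,\dots,l_n$ pass, i.e.\ the point denoted $M$ in Section~\ref{secdual}, renamed $O$ here to free the letter $M$ for the statement. Each $l_i$ is then the line $\mathbb{R}d_i$, because the vector $d_i$ of the definition preceding Proposition~\ref{lem:G=S} is a nonzero vector parallel to $l_i$; hence for a point $X$ one has $X\in l_i$ exactly when $\vec{OX}\in\mathbb{R}d_i$, and the relation ``$\cdots=-d_1$'' closing the system of Proposition~\ref{prop:equiv2} is nothing but ``$\cdots=-M$'' once $M$ is chosen so that $\vec{OM}=d_1$. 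Everything after this is a translation through that dictionary.

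First I would do the easy implication. If $P$ is quasi-rational, Proposition~\ref{prop:equiv2} furnishes rationals $t_2,\dots,t_n$ with $d_1+a_1=t_2d_2$, $d_1+a_1+t_2a_2=t_3d_3$, \dots, $d_1+a_1+t_2a_2+\dots+t_na_n=-d_1$. Let $M$ be the point with $\vec{OM}=d_1$. Then $M\in l_1$; the point $M+a_1$ has position vector $d_1+a_1=t_2d_2\in\mathbb{R}d_2$, so $M+a_1\in l_2$; inductively $M+a_1+t_2a_2+\dots+t_ka_k$ has position vector $t_{k+1}d_{k+1}\in\mathbb{R}d_{k+1}$ and so lies on $l_{k+1}$; and the final point has position vector $-d_1=-\vec{OM}$, hence equals $-M$. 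These are exactly the conditions of the corollary.

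For the converse, which carries the little bit of real work, I would start from a point $M$ and rationals $t_2,\dots,t_n$ satisfying the corollary's system, set $t_1:=1$, write $\vec{OM}=\lambda d_1$ (possible since $M\in l_1$), and record each partial sum $\lambda d_1+t_1a_1+t_2a_2+\dots+t_ka_k$ as $\mu_{k+1}d_{k+1}$ for $0\le k\le n$, with $\mu_1:=\lambda$ and indices taken mod $n$ so that $d_{n+1}=-d_1$; the closing equation then forces $\mu_{n+1}=\lambda$. Now substitute the defining relation $a_k=c_kd_{k+1}-d_k$ (where $d_k+a_k=c_kd_{k+1}$) into $\mu_kd_k+t_ka_k=\mu_{k+1}d_{k+1}$; since consecutive lines $l_k,l_{k+1}$ are distinct (no parallel edges), $d_k$ and $d_{k+1}$ are linearly independent, and matching coefficients gives $\mu_k=t_k$ and $\mu_{k+1}=t_kc_k$ for each $k$. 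The case $k=1$ yields $\lambda=\mu_1=t_1=1$, hence $\vec{OM}=d_1$, while $\mu_k=t_k$ for all $k$ (and $\mu_{n+1}=1$) means the recorded equations are, one for one, the system $d_1+a_1=t_2d_2,\ \dots,\ d_1+a_1+t_2a_2+\dots+t_na_n=-d_1$ of Proposition~\ref{prop:equiv2} with the given rational data; that proposition then gives that $P$ is quasi-rational. (Incidentally one reads off $c_k=t_{k+1}/t_k\in\mathbb{Q}$.)

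The hard part is not any computation but the bookkeeping between the two index conventions and, above all, pinning down the non-degeneracies that make the scalars $c_k,\mu_k$ well defined: that each $d_k$ is nonzero and not collinear with $d_{k+1}$, and that $t_k\neq 0$ in any solution of the system. These follow from $P$ being a genuine convex polygon without parallel edges, together with the fact (Proposition~\ref{lem:closed}) that $P^*$ is an honest $2n$-gon; granting them, the corollary is merely Proposition~\ref{prop:equiv2} read in the frame centred at $O$.
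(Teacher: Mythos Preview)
Your proposal is correct and follows the paper's own viewpoint exactly: the paper presents the corollary with the words ``This proposition can be reformulated in'' and gives no separate argument, so the intended proof is precisely the dictionary you spell out---take the origin at the common point of the $l_i$, identify $l_i$ with $\mathbb{R}d_i$, and observe that a point $M$ with $\vec{OM}=d_1$ converts the system of Proposition~\ref{prop:equiv2} into the affine system of the corollary and back. Your converse is in fact more careful than the paper's sketch: by matching coefficients in $\mu_kd_k+t_ka_k=\mu_{k+1}d_{k+1}$ via $a_k=c_kd_{k+1}-d_k$ you show that the normalisation $t_1=1$ forces $\vec{OM}=d_1$, whereas the paper (in the proof of Proposition~\ref{prop:equiv3}) would have you rescale by a homothety; both routes are immediate once the linear independence of $d_k,d_{k+1}$ is granted.
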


\section{All orbits are bounded for quasi-rational polygons}\label{sec:gs}
In this section we give a new proof of the following theorem using our definition of quasi-rational polygon. The aim is to understand the general outline of the proof, not to explain all the details.

\begin{theorem}\cite{Gu.Si.92}
For a quasi-rational polygon, every orbit of the dual billiard map is bounded.
\end{theorem}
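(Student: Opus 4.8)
The plan is to deduce the statement from the unfolded map $\tilde T$ via Proposition~\ref{lem:closed}, and to use quasi-rationality to manufacture a sequence of bounded $\tilde T$-invariant regions (``necklaces'') exhausting the plane, so that every orbit is caught between two consecutive ones. By Proposition~\ref{lem:closed} it is enough to prove that every orbit of $\tilde T$ on $\mathbb{R}^2\times\{-1,1\}$ is bounded. Fix a basepoint $M$ with its cones $C_1,\dots,C_{2n}$ bounded by the rays $R_1,\dots,R_{2n}$. Outside a fixed compact neighbourhood of $P$, Proposition~\ref{lem:closed} describes $\tilde T$ as the piecewise translation that on $C_i$ equals translation by $a_i$, with $a_{n+i}=-a_i$; moreover $a_i$ pushes $C_i$ towards $C_{i+1}$, so a forward orbit sweeps the cones in cyclic order $C_1\to C_2\to\cdots\to C_{2n}\to C_1\to\cdots$, winding around $M$.

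Next I would build one \emph{integral} necklace. By Corollary~\ref{lem:qr-equiv}, quasi-rationality provides rational numbers $t_1=1,t_2,\dots,t_n$ and a point on $l_1$ from which the broken line with successive displacements $t_1a_1,t_2a_2,\dots,t_na_n$ has its consecutive vertices on $l_2,\dots,l_n$ and ends at the point symmetric to the start; completing it by central symmetry (using $a_{n+i}=-a_i$) yields a closed convex $2n$-gon whose $i$-th side is a rational multiple of $a_i$ and whose $i$-th vertex lies on $R_i$. Multiplying everything by a common denominator $q$ gives a closed necklace $\Pi$ whose $i$-th side is a \emph{positive integer} multiple $m_ia_i$ of $a_i$. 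Then, starting from the vertex $p_1\in R_1$ of $\Pi$ and applying $\tilde T$ exactly $m_1$ times one reaches $p_2\in R_2$, then after $m_2$ more steps $p_3\in R_3$, and so on around all $2n$ rays, so the vertices of $\Pi$ form a single periodic orbit of $\tilde T$. The same is true of every dilate $N\Pi$, $N\in\mathbb{Z}_{>0}$; for $N$ large these are disjoint nested convex curves, all surrounding $P$, exhausting $\mathbb{R}^2$.

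It then remains to see that each $N\Pi$ (for $N$ large enough that it lies in the region where $\tilde T$ is the piecewise translation) bounds a region that $\tilde T$ maps into itself. Inside a cone $C_i$ the map $\tilde T$ translates by $a_i$, which is precisely the direction of the side $[Np_i,Np_{i+1}]$ of $N\Pi$; hence while an orbit runs through $C_i$ it stays on one fixed side of the line carrying that side of $N\Pi$, and it can change sides of $N\Pi$ only near a corner $Np_i$. Since the corners of $N\Pi$ are themselves a periodic orbit of $\tilde T$ and $\tilde T$ is an area-preserving, cyclic-order-preserving piecewise translation near them, an orbit cannot pass from the inside of $N\Pi$ to its outside; this is the necklace invariance of \cite{Gu.Si.92}. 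Granting it, every orbit of $\tilde T$ is confined for all time to the bounded annulus between the two consecutive necklaces $N\Pi\subset(N+1)\Pi$ enclosing its initial point (or inside the innermost large necklace, if it starts near $P$). Hence every orbit of $\tilde T$, and therefore of $T$, is bounded.

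The crux is the necklace invariance used in the third step: controlling what an orbit does as it passes from one cone to the next in the immediate vicinity of a corner of $N\Pi$, where $\tilde T$ is discontinuous and the ``translation parallel to the side'' picture degenerates --- this is the delicate estimate of \cite{Gu.Si.92}, and close to $P$ it is exactly what the Pinwheel analysis of \cite{Sch.09, Sch.11} is needed for. By contrast, quasi-rationality enters only in the second step: by Theorem~\ref{thm:equiv} it is equivalent to the commensurability of the areas $|\Sigma_i\cap\Sigma_{i+1}|$, and this is precisely what allows a necklace with all sides commensurable to the respective $a_i$ to exist at all, hence at arbitrarily large scales.
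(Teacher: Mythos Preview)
Your outline is a correct sketch of the classical ``necklace barrier'' argument, but it is not the route the paper takes. The paper does \emph{not} build invariant closed curves and trap orbits between them. Instead it tiles each cone $C_i$ by translates of the parallelogram with side $d_i$ and diagonal $a_i$, considers the first-hitting maps $f_i:C_i\to C_{i+1}$, and observes the skew-periodicity $f_1(x+d_1)=f_1(x)+a_1+d_1=f_1(x)+t_2d_2$. Composing around, the first-return map $F$ to $C_1$ satisfies $F(x+u)=F(x)+u$ for some vector $u$ along $l_1$; quasi-rationality is then invoked only at the very end, to say that $u$ and $d_1$ are rationally proportional, i.e.\ that the two parallelogram tilings of $C_1$ (the original one and the one induced by $F$) are commensurate, whence every $F$-orbit is bounded.

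So the two arguments organise the same ingredients differently. You use Corollary~\ref{lem:qr-equiv} up front to manufacture an integral necklace and then argue geometric invariance at the corners (which you rightly flag as the delicate point and defer to \cite{Gu.Si.92}); the paper postpones quasi-rationality to a single commensurability check between $u$ and $d_1$ and never speaks of invariant curves at all. Your version is closer to the original \cite{Gu.Si.92} picture and makes the barrier mechanism visually clear; the paper's version highlights that the whole obstruction reduces to a one-dimensional commensurability of tilings along a single ray, which is exactly the form of quasi-rationality given in Theorem~\ref{thm:equiv}. One small caution on your side: the claim that the vertices of $\Pi$ form a genuine $\tilde T$-orbit uses that $\tilde T$ is translation by $a_i$ all the way up to the ray $R_{i+1}$, which Proposition~\ref{lem:closed} only asserts when both $A$ and $\tilde TA$ lie in $C_i$; this boundary issue is part of the same corner analysis you already defer to \cite{Gu.Si.92}.
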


We consider the unfolding and the cone $C_1$. We can tile periodically this cone by a parallelogramm with one side equal to $d_1$ and one diagonal equal to $a_1$. The same thing can be done in all cones. Consider a point $x$ and the first hitting map with the next cone: $f_1$. We have $f_1(x+d_1)=f_1(x)+a_1+d_1$, we deduce $f_2(f_1(x+d_1))=f_2(f_1(x)+t_2d_2)$.
Since the polygon is quasi-rational there exists an integer $n$ such that $f_2(f_1(x+nd_1))=f_2(f_1(x)+nt_2d_2)=f_2(x)+n'(a_2+d_2)$.
Now the first return map to the cone $C_1$ is the composition of $f_1,\dots, f_n$. We obtain that there exists a vector $u$ such that for every $x$
$$F(x+ u)=F(x)+u$$
In term of parallelograms, it means that we consider a point in one box and take the image of the box by $F$. We have a second periodic tiling of the cone by a parallelogram with side $u$ and diagonal $a_1$. The orbit of the point $x$ depends on the cutting of a box of the new tiling by the initial one. If the two tilings are commensurates then every orbit is bounded. We must compare $u$ and $d_1$: they are rationally proportional by definition of quasi-rational polygon. 
If $P$ is rational every box is mapped by $F$ to a box, thus every orbit is periodic.  

\begin{figure}[h]
\begin{center}
\begin{tikzpicture}[scale=.3]
\draw (0,0)--(12,0);
\draw (0,0)--++(10,10);
\draw (6,0)--++(-2,4);
\draw (12,0)--++(-4,8);
\draw (4,4)--++(8,0);
\draw (8,8)--++(4,0);

\draw (0,0)--(0,17);
\draw (4,4)--++(-4,2);
\draw (8,8)--++(-8,4);
\draw (0,6)--++(8,8);
\draw (0,12)--++(4,4);

\draw (0,0)--++(-9,18);
\draw (0,8)--++(-4,0);
\draw (0,16)--++(-8,0);
\draw (-4,8)--++(0,10);
\end{tikzpicture}
\end{center}
\caption{Tilings of consecutive cones}
\end{figure}
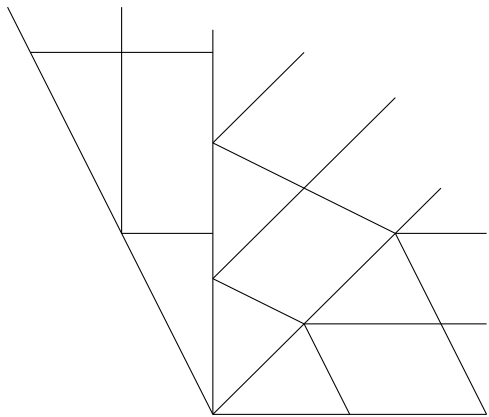

\section{Graph of spokes}
\subsection{Definitions}
By definition, for each integer $i$, $a_i$ is a vector between two vertices of $P$, and every vertex is a starting point of some $a_i$.
Define a graph with vertices the vertices of $P$, and there is an oriented edge starting from each vertex and joining the end of the vector $a_i$ associated to the vertex. Denote it $\mathcal{S}(P)$, and we call it the {\bf graph of spokes}.

\begin{example}
Consider the polygon $ABCD$ of Figure \ref{fig-def-1}, then $\mathcal{S}(P)$ is 
given by 
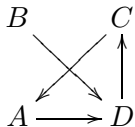
\begin{figure}[h]
\begin{center}
$\xymatrix{
B \ar[dr] & C\ar[dl] \\
A\ar[r]&D\ar[u]
}$
\caption{Graph of spokes}\label{fig:spoke}
\end{center}
\end{figure}
\end{example}

 \begin{lemma}
 This graph has following properties: 
 \begin{itemize}
\item Each vertex has an outgoing edge.
\item Two edges can not have the same vertices.
\item The graph contains a cycle.
\end{itemize}
 \end{lemma}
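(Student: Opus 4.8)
The plan is to establish the three properties directly from the construction of the graph of spokes. First I would unwind the definitions: by Proposition~\ref{lem:G=S}, each $a_i$ is the vector $2\vec{vw}$ where $v$ is the final vertex of the $i$-th edge (in the counterclockwise ordering) and $w$ is the vertex of $P$ furthest from the supporting line of that edge; moreover the strips indexed by $l_i$ and $l_{i+1}$ are consecutive for the slopes and $a_i=V_i$, while $a_i$ and $a_{i+1}$ share a vertex. The key combinatorial fact I want to extract is that the map sending a vertex $v$ of $P$ to the tail of the spoke leaving $v$ is a genuine function on the $n$ vertices: every vertex is the final vertex of exactly one edge, so every vertex emits exactly one spoke. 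This gives the first property immediately.

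For the second property --- no two edges share both endpoints --- I would argue by convexity. Suppose two distinct vertices $v, v'$ both have spokes landing on the same vertex $w$, so $a_i = 2\vec{vw}$ and $a_j = 2\vec{v'w}$ with $v$ the endpoint of edge $i$ and $v'$ the endpoint of edge $j$. Here $w$ is simultaneously the farthest vertex from the line of edge $i$ and from the line of edge $j$. I would use the ordering-by-slope structure together with the fact that as the edge rotates counterclockwise around the convex polygon, the farthest vertex moves monotonically around $P$ as well, so the pairs (final vertex of edge, farthest vertex) cannot repeat --- more precisely the two endpoints of a spoke determine the edge. An alternative and perhaps cleaner route: the $n$ spokes, viewed as oriented edges of a graph on $n$ vertices in which every vertex has out-degree $1$, would have a repeated oriented edge only if the tail map identified two vertices, which the convexity/monotonicity of the "farthest vertex" assignment forbids; I would spell this out using that $P$ has no parallel edges.

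For the third property --- the graph contains a cycle --- this is the easy consequence of the first: a finite directed graph in which every vertex has at least one outgoing edge must contain a directed cycle, since following outgoing edges from any vertex produces an infinite walk in a finite vertex set, hence a repetition, hence a cycle. I expect the main obstacle to be the second property: making the convexity argument that the tail map is injective (equivalently, that the ordered pair of endpoints of a spoke is determined by the spoke) fully rigorous requires care about how the "farthest vertex" varies as one cycles through the edges in slope order, and about the role of the no-parallel-edges hypothesis. Everything else is essentially a restatement of Proposition~\ref{lem:G=S} plus an elementary pigeonhole argument.
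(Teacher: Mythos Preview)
Your treatment of the first and third items matches the paper: the paper leaves the first two to the reader and, for the third, observes exactly as you do that the graph is a self-map of a finite set, hence has a cycle.

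Your argument for the second item, however, is aimed at the wrong target. You set up the situation ``two distinct vertices $v,v'$ both have spokes landing on the same vertex $w$'' and then try to exclude it by a convexity/monotonicity argument on the farthest-vertex assignment. But that situation is \emph{not} what ``two edges have the same vertices'' means, and in fact it is \emph{not} excluded: in the paper's own quadrilateral example (Figure~\ref{fig:spoke}) both $A$ and $B$ have their spoke landing on $D$, and the paper explicitly remarks afterwards that the associated vertex map ``is not a bijection since no edge goes to $B$''. So the injectivity you are trying to prove is false in general, and any argument purporting to establish it must contain an error.

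The actual content of item two is much lighter. Since each vertex of $P$ is the final vertex of exactly one side, the $n$ spokes have $n$ pairwise distinct tails; hence no two of them can coincide as oriented edges (and, since $a_i$ and $-a_i$ never both occur among $a_1,\dots,a_n$, no two coincide as unordered pairs either). Your ``alternative route'' paragraph brushes against this, but you attribute it to monotonicity of the farthest-vertex map, which is beside the point: what does the work is simply that distinct sides of $P$ have distinct final vertices. I would drop the first convexity argument entirely and state this directly.
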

 \begin{proof}
 Proof left to the reader for the two first items.
 An edge between vertices $A, B$ implies that some vector $a_i=\vec{AB}$. Thus the graph is the same thing as a map defined on the set of vertices. This map is defined everywhere but not necessarily injective. It is injective on a subset. On this subset the graph is a cycle.
 \end{proof}
 
 \begin{corollary}\label{rel-alg-ai}
 For every polygon, 
there exists a rational relation between the vectors $a_1,\dots,a_n$.
\end{corollary}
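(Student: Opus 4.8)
The plan is to deduce Corollary~\ref{rel-alg-ai} directly from the structure of the graph $\mathcal{S}(P)$ established in the preceding lemma. The key observation is that the edges of $\mathcal{S}(P)$ are exactly the vectors $a_1,\dots,a_n$, viewed as oriented segments between vertices of $P$, and that $\mathcal{S}(P)$ is the functional graph of a map $\varphi$ on the vertex set of $P$ (sending a vertex $v$ to the endpoint of the spoke issuing from $v$). By the lemma, this functional graph contains a cycle, say on the vertices $w_1 \to w_2 \to \cdots \to w_k \to w_1$.

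First I would record that, along such a cycle, the corresponding spokes are vectors $a_{i_1} = \vec{w_1 w_2},\ a_{i_2} = \vec{w_2 w_3},\ \dots,\ a_{i_k} = \vec{w_k w_1}$, and that summing telescopically around the cycle gives
$$a_{i_1} + a_{i_2} + \dots + a_{i_k} = \vec{w_1 w_2} + \vec{w_2 w_3} + \dots + \vec{w_k w_1} = \vec{0}.$$
This is already a rational (indeed integer-coefficient) linear relation among the $a_i$'s with not all coefficients zero, and each coefficient is $0$ or $1$. If one wants a relation genuinely involving potentially all of $a_1,\dots,a_n$, one can simply note that any nontrivial relation among a subset is a nontrivial rational relation among the full family (assigning coefficient $0$ to the absent vectors); so the cycle relation suffices for the statement as phrased.

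The main point requiring a word of care is the passage from "the functional graph contains a cycle" to "a telescoping sum over that cycle vanishes": one must be sure the cycle is a genuine directed cycle in $\mathcal{S}(P)$ with each consecutive pair joined by a spoke oriented the right way, which is precisely what the lemma provides (the map $\varphi$ restricted to the periodic subset is a bijection, hence a union of directed cycles, and picking any one of them does the job). I do not expect a real obstacle here; the only subtlety is purely bookkeeping — matching the indices $i_1,\dots,i_k$ of the spokes to the vertices of the chosen cycle — and this is immediate from the definition of the graph of spokes. Thus the corollary follows at once from the lemma.
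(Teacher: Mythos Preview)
Your proof is correct and follows exactly the same route as the paper: use the cycle in $\mathcal{S}(P)$ provided by the preceding lemma and observe that the telescoping sum of the corresponding $a_i$'s around that cycle vanishes, giving a nontrivial integer (hence rational) relation. The paper's version is simply a one-line sketch of this argument, while yours spells out the bookkeeping.
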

\begin{proof}
By preceding Lemma there exists a cycle in the graph. It implies that the sum of vectors $a_i$ associated to this cycle is null.
\end{proof}

\begin{remark}
The notion of spokes is used in outer billiard by Schwartz to prove its result on the pinwheel map, see \cite{Sch.10}.
\end{remark}

We now use the preceding tools to obtain new results on quasi-rational polygons.
\section{Description of quasi-rational polygons}\label{sec:new}
\begin{theorem}\label{defs-quasi}
We have:
\begin{itemize}
\item A quadrilateral is a quasi-rational polygon if and only if it is rational.
\item There exists a  non regular and non rational quasi-rational pentagon.
\item Assume the graph of spokes is a cycle (or an union of cycles). Then the polygon is quasi-rational.
 \end{itemize}
\end{theorem}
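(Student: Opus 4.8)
My plan is to prove the three items separately, since they are quite different in character, and to lean heavily on Corollary~\ref{lem:qr-equiv} and on the graph of spokes $\mathcal{S}(P)$.

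For the third item, the idea is that when $\mathcal{S}(P)$ is a union of cycles, the map sending each vertex of $P$ to the endpoint of its outgoing spoke $a_i$ is a permutation of the vertex set, so each vertex is the head of exactly one spoke. Writing each $a_i = w_{i}' - w_i$ for vertices $w_i, w_i'$ of $P$, the cycle structure forces a closed relation: along each cycle $(u_1 \to u_2 \to \dots \to u_k \to u_1)$ the corresponding spokes sum telescopically to $0$. More to the point, I want to produce the rational solution demanded by Corollary~\ref{lem:qr-equiv}. The point is that, under the cycle hypothesis, I claim one can take all the $t_i$ equal to $1$: indeed the broken line $M, M+a_1, M+a_1+a_2, \dots$ visits, at step $i$, a vertex-image lying on the line $l_i$ (this is essentially the content of Proposition~\ref{lem:G=S}, that consecutive $a_i$ share a vertex and that $d_i$ lies on $\Sigma_i$), and the closing relation $a_1 + \dots + a_n = $ (something forced to be $-2M$ for a suitable $M$ on $l_1$) comes from the fact that the spokes, grouped by cycles, each sum to zero, hence the full sum over all $n$ edges is a sum of closed cycles and can be arranged to close up the $P^*$-polygon. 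So the hard part here is checking that the $t_i = 1$ choice is consistent with the line-incidence conditions $M + a_1 + \dots + a_i \in l_i$; I expect this to follow by the same local picture (triangle $G C D$, etc.) used in the proof of Proposition~\ref{prop:equiv1}, where the cycle hypothesis exactly kills the ratios $r$ that would otherwise be forced to be $1$.

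For the first item, one direction is trivial: a rational polygon is quasi-rational (all the cross products $|V_i \wedge V_j|$ lie in a single lattice and are hence commensurate). For the converse, I would enumerate the possible graphs of spokes on $4$ vertices. For a quadrilateral $ABCD$ the spokes are (as computed in the excerpt) $a_1 = 2\vec{CB}$, $a_2 = 2\vec{AC}$, $a_3 = 2\vec{BD}$, $a_4 = 2\vec{BA}$ (up to relabelling), and the graph $\mathcal{S}(P)$ is forced — it is the one drawn in Figure~\ref{fig:spoke}, which is already a single cycle $A \to D \to C \to B \to A$ plus a chord; by the combinatorics (each vertex has one outgoing edge, no repeated edge) there is essentially only one such graph. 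Then I would write out the commensurability conditions $|a_i \wedge a_{i+1}| / |a_{i+1} \wedge a_{i+2}| \in \mathbb{Q}$ explicitly in terms of the four vertices (four real parameters after normalising by affine transformations — but affine maps do not preserve quasi-rationality in general, so care is needed; better to keep all eight coordinates and quotient only by the similarities/translations that the class respects) and show that the resulting rational constraints force the four vertices onto a common lattice. Concretely: two independent edge vectors, say $\vec{AB}$ and $\vec{AD}$, span $\mathbb{R}^2$; the commensurability relations express $\vec{AC}$ as a rational combination of them, so after rescaling all four vertices lie in $\mathbb{Z}\vec{AB} + \mathbb{Z}\vec{AD}$, i.e.\ $P$ is rational. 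The main obstacle is bookkeeping: making sure the chain of ratios really pins down all four vertices and not merely three of them, but with only $n=4$ there is little room and the closing relation $a_1 + a_2 + a_3 + a_4$ being a forced integer combination should finish it.

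For the second item it suffices to exhibit one example. I would start from a regular pentagon (which is quasi-rational but not rational) and perturb it within the quasi-rational locus: by the third item, any pentagon whose graph of spokes is a cycle is quasi-rational, so I would pick five vertices in ``general-enough'' position so that $\mathcal{S}(P)$ is a $5$-cycle, with the vertices chosen irrational and non-symmetric enough that $P$ is neither regular nor rational. Verifying that $\mathcal{S}(P)$ is a cycle amounts to checking, for each edge, which vertex is farthest from its supporting line and confirming that these five ``farthest'' assignments form a single cyclic permutation; a small explicit coordinate choice makes this a finite check. The only delicate point is simultaneously guaranteeing non-rationality (easy — put an irrational coordinate somewhere, and use the first item's proof in reverse to see rationality would force a lattice relation that fails) and non-regularity (trivial by making two edge lengths unequal), while keeping the cycle structure; I expect a one-parameter family near the regular pentagon to work, with the cycle structure stable under small perturbation.
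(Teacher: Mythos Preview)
Your proposal has genuine gaps in Items~1 and~3; your approach to Item~2 is different from the paper's but plausible, provided Item~3 is correctly established.

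\medskip

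\textbf{Item 3.} The claim that one may take all $t_i=1$ is unfounded. By definition $d_i+a_i$ is merely \emph{collinear} to $d_{i+1}$, i.e.\ $d_i+a_i=c_i d_{i+1}$ for some scalar $c_i$; in the notation of Figure~\ref{fig:cruc} this ratio is $r=|\vec{GC}|/|\vec{HB}|$, which has no reason to equal~$1$ even when $\mathcal{S}(P)$ is a cycle. So the chain $M,M+a_1,M+a_1+a_2,\dots$ will generically fail the incidence conditions. The paper does not try to pin down specific $t_i$: instead it observes that when $\mathcal{S}(P)$ is a union of cycles the vertex map is a bijection, so every vertex---and hence every side vector $l_i$---is an \emph{integer} (in particular rational) combination of $a_1,\dots,a_n$. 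The system in Corollary~\ref{lem:qr-equiv} then reduces to finding rationals $r_j$ with $\lambda\sum r'_j a_j+\sum r_j a_j=0$ for rational $r'_j$, and the nontrivial rational relation among the $a_j$ coming from any cycle furnishes such a solution. Your telescoping idea uses the cycle relation only for closure, missing that it is also needed to produce the $t_i$ themselves.

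\medskip

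\textbf{Item 1.} Two slips. First, the graph in Figure~\ref{fig:spoke} is \emph{not} a $4$-cycle: it is the $3$-cycle $A\to D\to C\to A$ with the extra edge $B\to D$; in particular $B$ has no incoming edge, so Item~3 does not apply. Second, and more seriously, the commensurability conditions do \emph{not} directly express the missing vertex rationally in a basis. Writing $a_3=\alpha a_1+\beta a_2$, $a_4=\gamma a_1+\delta a_2$, the four areas $|a_i\wedge a_{i+1}|$ are proportional to $1,\alpha,\alpha\delta-\beta\gamma,\delta$, so quasi-rationality yields $\alpha,\delta\in\mathbb{Q}$ and only the \emph{product} $\beta\gamma\in\mathbb{Q}$. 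To split this into $\beta\in\mathbb{Q}$ and $\gamma\in\mathbb{Q}$ one must invoke the rational linear relation among $a_1,\dots,a_4$ guaranteed by Corollary~\ref{rel-alg-ai} (coming from the $3$-cycle above) and check case by case that any such relation forces the splitting. Your outline elides this step.

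\medskip

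\textbf{Item 2.} The paper builds an explicit pentagon with four lattice vertices and one off-lattice vertex, arranging the spokes so that $a_1+a_3+a_5=0$ and tuning $D$ so that $|a_2\wedge a_3|\in\mathbb{Q}$; it does \emph{not} use Item~3 (indeed its example has $\mathcal{S}(P)$ containing a $3$-cycle, not a $5$-cycle). Your perturbation argument---deform the regular pentagon slightly, note that the $5$-cycle structure of $\mathcal{S}(P)$ is open, conclude quasi-rationality from Item~3, and pick the perturbation non-regular and non-lattice---is a legitimate alternative and arguably more conceptual, but it only works once Item~3 is proved correctly.
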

\begin{proof}
\begin{itemize}
\item
Consider $(a_1, a_2)$ as a basis of $\mathbb{R}^2$, denote $\alpha, \beta$ the coordinates of $a_3$ in this basis, and $(\gamma, \delta)$ those of $a_4$:
$a_3=\alpha a_1+\beta a_2, a_4=\gamma a_1+\delta a_2$.
The numbers $|a_1\wedge a_2|, |a_3\wedge a_2|, |a_4\wedge a_3|, |a_4\wedge a_1|$ are proportional to $1, \alpha, \alpha\delta-\beta\gamma,\delta$. If the polygon is quasi-rational, by Theorem \ref{thm:equiv} we deduce $\alpha,\delta,\alpha\delta-\beta\gamma\in\mathbb{Q}$. This implies $\beta\gamma\in\mathbb{Q}$. Now Corollary \ref{rel-alg-ai} implies that there exists a rational linear relation between $a_1,\dots, a_4$. This relation concerns at least three vectors. All possibilities imply $\beta, \gamma\in\mathbb{Q}$. Thus the polygon has vertices on a lattice.

\item Consider four points on a lattice of $\mathbb{R}^2$. Denote these points $A, B, C, E$. We will contruct a point $D$ such that the pentagon $ABCDE$ will be as required. It suffices to consider one point $D$ outside the lattice. We can always choose $D$ such that  the spokes of the pentagon $ABCDE$ are associated to vectors $\vec{AC}, \vec{BE}, \vec{CE}, \vec{DB}, \vec{EA}$. Then the rational relation is $a_1+a_3+a_5=0$. There is no other rational relation by definition of $D$. 
Now we can express the vectors $a_1, \dots, a_5$ in the basis $(a_1, a_2)$. By construction $a_3, a_5$ have rational coordinates. Then we can always choose $D$ such that the area $|a_2\wedge a_3|$ is rational. The constructed pentagon is quasi-rational.

\item Now assume that the graph of spokes is an union of cycles.
By Corollary \ref{lem:qr-equiv} a polygon is quasi-rational if
 for every side $l_i$, there exists $\lambda\in\mathbb{R}$ and rational numbers $r_1,\dots, r_n\in\mathbb{Q}^*$ such that 
$$\lambda l_i+r_1a_1+\dots+r_na_n=0.$$

If the graph is a union of cycles, then the map defined on vertices associated to the graph of spokes is invertible. It means that each vertex is a linear combination of $a_1\dots a_n$. Thus the side $l_i$ can be expressed as rational combination of the $a_i$'s. Thus $P$ is quasi-rational if there exists $r_1\dots r_n\in\mathbb{Q}$ and $\lambda\in\mathbb{R}$ such that:  
$$\lambda \displaystyle\sum r'_ja_j+r_1a_1+\dots+r_na_n=0.$$
Since the graph is a cycle, there exists a rational relation between $a_1\dots a_n$. Thus we can solve the equation and find $r_1\dots r_n, r'_1\dots r'_n$.
\end{itemize}
\end{proof}

\begin{remark}
Consider the example of graph in Figure \ref{fig:spoke}. In this case the preceding map is not a bijection since no edge goes to $B$.

For regular polygon with odd number of sides (greater than five), the graph is not simply connected.
\end{remark}

\section{Remarks}
\subsection{Polygon with parallel sides}
If the polygon has parallel sides, then the definition of \cite{Gu.Si.92} still works. Nevertheless the number of cones decreases. For the definition of \cite{Sch.09} we need to be more precise to define a strip. In this case two consecutive strips can have an intersection with infinite area. Thus the new definition of quasi-rational is that, up to a factor, the areas of $\Sigma_i\cap \Sigma_{i+1}$ are in $\mathbb{Z}\cup \{\infty\}$ for every integer $i$. 

\subsection{Regular polygons}
A regular polygon with $n$ edges is invariant by rotation of angle $2\pi/n$. Let $\omega$ be a $n$ th root of unity, we have $a_i=\omega a_{i-1}+a_{i-2}$ for every integer $i$. Thus it is clear that $|a_i\wedge a_{i+1}|$ is a constant number, and a regular polygon is a quasi-rational polygon. Moreover the graph of spokes is a cycle, since the spoke $a_{i+1}$ is the image of $a_i$ by rotation of angle $2\pi/n$. This gives another proof of previous fact.

The study of regular polygons has been done if the number of sides is equal to $5$ by Tabachnikov, see \cite{Ta.95}. A description of the symbolic dynamics has been made for regular polygons with $3,4,5,6, 10$ edges in \cite{Bed.Ca.10}. In \cite{Sch.10} Schwartz initiates a study of the regular octogon.

\bibliographystyle{alpha}
\bibliography{bibli-quasirat}
\end{document}